\documentclass[12pt,english,spanish]{article}
\usepackage{amsmath}
\usepackage[T1]{fontenc}
\usepackage[letterpaper]{geometry}
\geometry{verbose,tmargin=2cm,bmargin=2cm,lmargin=2cm}
\usepackage{amsmath}
\usepackage{amssymb}
\usepackage{comment}
\usepackage{graphicx}


\usepackage{epic}
\usepackage{pstricks}
\usepackage{pst-node}

\usepackage{pstricks-add}
\usepackage{amsfonts}
\usepackage{amsmath}
\usepackage{latexsym}
\usepackage[all]{xy}
\usepackage{amsthm}
\usepackage{graphicx}
\usepackage[T1]{fontenc}
\usepackage{hyperref}
\usepackage{caption}
\usepackage{verbatim}
\usepackage{amsmath}
\usepackage{amssymb}
\usepackage{amsthm}
\usepackage{amscd}
\usepackage{graphics}
\usepackage{graphpap}
\usepackage{float}

\newtheorem{definicion}{Definition}[section]
\newtheorem{nota}[definicion]{Remark}
\newtheorem{prop}[definicion]{Proposition}
\newtheorem{lema}[definicion]{Lemma}

\newtheorem{teorema}[definicion]{Theorem}
\newtheorem{cor}[definicion]{Corollary}
\newtheorem{ejp}[definicion]{Example}

\newcommand{\co}{\ensuremath{\colon}} 


\tolerance = 10000

\oddsidemargin 1cm \evensidemargin 1cm \textwidth=14.5cm
\textheight=21truecm \unitlength=1cm

\parskip 3mm
\unitlength=1cm

\def\BBox{\kern  -0.2cm\hbox{\vrule width 0.2cm height 0.2cm}}

\renewcommand{\phi}{\varphi}

\newtheorem{theorem}{Theorem}[section]

\newtheorem{remark}[theorem]{Remark}

\title{Critical properties on Roman domination graphs}
\author{
A. Martínez-Pérez \thanks{Partially supported by MTM 2012-30719, alvaro.martinezperez@uclm.es},\\
{\small Universidad Castilla la Mancha, Spain.}\\
\\[1ex]
D. Oliveros\thanks{Supported by PAPIIT under project IN101912  and CONACyT project 166306,   dolivero@matem.unam.mx}\\
{\small  Instituto de Matem\'{a}ticas}\\
{\small  Universidad Nacional Aut\'{o}noma de M\'{e}xico, M\'{e}xico.}\\
}

\begin{document}
\maketitle

\begin{abstract}
A Roman domination function on a graph $G$ is a function $r:V(G) \to \{0,1,2\}$ satisfying the condition that every vertex $u$ for which $f(u)=0$ is adjacent
to at least one vertex $v$ for which $f(v)=2$. The weight of a Roman function is the value $r(V(G))= \sum_{u\in V(G)} r(u)$. The Roman domination number 
$\gamma_R(G)$ of $G$ is the minimum weight of a Roman domination function on $G$ . "Roman Criticality" has been defined in general as the study of graphs were
the Roman domination number decreases when removing an edge or a vertex of the graph. In this paper we give further results in this topic as well as the complete
characterization of critical graphs that have Roman Domination number $\gamma_R(G)=4$.
\end{abstract}


{\bf Key words.} ~ Roman Domination, critical 

{\bf MSC 2000.} ~ 05C69


\section{Introduction}

According to \cite{Cockayne}, the Emperor Constantine the Great in the fourth century A.D. decreed that for the defense of the cities in the empire, any city without a 
legion stationed, must have a neighbor city having two stationed legions to secure it. A classical strategy problem, of course, is to minimize the total number of legions 
needed.  It becomes very natural to  generalize the problem to arbitrary graphs. Several studies and interesting results have been done in this topic, see for instance 
\cite{Cockayne} and \cite{Haynes} for basic properties on Roman domination functions, and for an excellent motivation on the topic see \cite{ReVelle} and \cite{steward}. 
Recently in \cite{Hansberg} the authors studied Roman criticality on Roman domination functions giving some results on critical block graphs and critical trees. 
Herein, we give some characterizations for some properties on vertex Roman critical graphs, edge Roman critical graphs and saturated Roman graphs with a slight difference on the definitions given in \cite{Hansberg}. Furthermore, we present a full characterization of these graphs when the minimal Roman number is four. 

\section{Characterizing Roman critical graphs}

Let $G=(V(G),E(G))$ be a simple graph of order $n$,  this is, a graph without loops and multiple edges and with $|V(G)|=n$. Let  $V(G)$ and $E(G)$ denote, as usual, the sets of vertices and edges respectively. The degree of a vertex $v$, $deg(v)$, is the number of edges in $E(G)$ incident to it. We denote by $N_{G}[v]$, or just $N[v]$ the closed neighborhood  of a vertex $v$ in $G$. As usual, $G\backslash \{v\}$ denotes the graph which is obtained by removing vertex $\{v\}$ together with all edges containing it.

\begin{definicion} A function  $r\co V(G)\to \{0,1,2\}$ is a  \emph{Roman domination function} if for every $u\in V(G)$ such that $r(u)=0$, then there is a vertex $v$ adjacent to $u$ so that $r(v)=2$.  The weight of a Roman domination function is the value $r(V(G))=\sum_{u\in V}r(u)$. The \emph{Roman domination number} of a a graph $G$, denoted by $\gamma_R(G)$ is  the minimum weight of all possible Roman domination functions on $G$. 

If a Roman function $r$ holds that $\gamma_R(X)=\sum_{v\in V(G)}r(v)$ we say that $r$ is a minimal Roman domination function.
\end{definicion}

Given a graph $G$ and a Roman domination function $r:V(G)\to \{0,1,2\}$ let $P:=(V_0;V_1;V_2)$  be the order partition of $V(G)$ induced by $r$, where $V_i=\{v\in V(G)\, |\, r(v)=i\}$ is called a
{\it Roman partition}. Clearly there is a one to one correspondence between Roman functions $r:V(G)\to \{0,1,2\}$ and Roman partitions $(V_0;V_1;V_2)$. Then, we may denote $r=(V_0;V_1;V_2)$.   

\noindent Observe that  If $H=(V(H),E(H))$ is a subgraph of $G=(V(G), E(G))$ with $V(H)=V(G)$ and $E(H)\subset E(G)$ then for any Roman domination function $r$ on $H$, $r$ is also a Roman domination function on $G$. Therefore $\gamma_R(G)\leq \gamma_R(H)$.

\begin{definicion} A graph $G$ is \emph{vertex critical} or \emph{v-critical} if $\gamma_R(G)=n$ and for every $v \in V(G)$, $\gamma_R(G\backslash \{v\})=n-1$.
\end{definicion}

\begin{definicion} A graph $G=(V(G),E(G))$ is \emph{Roman saturated} if for any pair of nonadjacent vertices $v,w$, the graph $G'=(V(G),E(G)\cup[v,w])$ holds 
that $\gamma_R(G')=\gamma_R(G)-1$
\end{definicion}

The previous definition was given in \cite{Hansberg} as $\gamma_R$-edge critical, however we believe that Roman saturated is a better name for this type of graphs.

\noindent The following useful lemma was proved in \cite{Hansberg} ,

\begin{lema}\label{Lemma: v-crit_0} A graph $G=(V(G),E(G))$ is v-critical if and only if  for every vertex $v$ there is a minimal Roman partition $(V_0;V_1;V_2)$ such that $v\in V_1$.
\end{lema}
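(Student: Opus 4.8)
The plan is to prove both implications by passing minimal Roman functions back and forth between $G$ and the vertex-deleted graphs $G\setminus\{v\}$ via two elementary operations. The first is \emph{restriction}: if $r=(V_0;V_1;V_2)$ is any Roman function on $G$ with $v\in V_1$, then deleting $v$ cannot spoil the domination of any remaining vertex, since a vertex valued $1$ dominates nobody; hence $r$ restricted to $V(G)\setminus\{v\}$ is a Roman function on $G\setminus\{v\}$ of weight $r(V(G))-1$. The second is \emph{extension}: starting from any minimal Roman function $r'$ on $G\setminus\{v\}$ and setting $r(v)=1$ yields a Roman function on $G$, because every formerly dominated vertex keeps its dominating $2$-neighbor and $v$ is now self-covered; this has weight $\gamma_R(G\setminus\{v\})+1$. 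Extension gives the universal inequality $\gamma_R(G)\le\gamma_R(G\setminus\{v\})+1$, i.e. $\gamma_R(G\setminus\{v\})\ge\gamma_R(G)-1$, valid for every vertex $v$ of every graph.

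For the forward implication I would assume $G$ is v-critical and exploit $\gamma_R(G)=n$ directly: the constant function $r\equiv 1$ has weight $n=\gamma_R(G)$ and is therefore itself minimal, and its partition has $V_1=V(G)$, so \emph{every} vertex already lies in $V_1$ of this single minimal partition. (Alternatively, for a fixed $v$ one invokes $\gamma_R(G\setminus\{v\})=n-1$ and applies extension to build a weight-$n$ minimal function on $G$ with $r(v)=1$.) For the converse I would fix $v$, choose by hypothesis a minimal partition with $v\in V_1$, and apply restriction to obtain a Roman function on $G\setminus\{v\}$ of weight $\gamma_R(G)-1$, so $\gamma_R(G\setminus\{v\})\le\gamma_R(G)-1$; combined with the universal lower bound from extension this forces $\gamma_R(G\setminus\{v\})=\gamma_R(G)-1$, and ranging over all $v$ delivers the vertex-deletion condition.

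The step I expect to be the genuine obstacle is the remaining clause $\gamma_R(G)=n$ required by the definition of v-critical. The exchange argument above only controls the \emph{differences} $\gamma_R(G)-\gamma_R(G\setminus\{v\})$ and says nothing about the absolute value $\gamma_R(G)$, so the equality $\gamma_R(G)=n$ must be extracted by a separate argument or read off from the precise convention for ``v-critical'' inherited from \cite{Hansberg}. Pinning down exactly how (or whether) the hypothesis that every vertex can be placed in $V_1$ forces $\gamma_R(G)=n$ is the part of the proof that needs the most care, and I would handle it first, before assembling the two-inequality equivalence above.
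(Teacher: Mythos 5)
The paper itself does not prove this lemma; it is quoted from \cite{Hansberg} (``The following useful lemma was proved in \cite{Hansberg}''), so there is no in-text proof to compare against. That said, your restriction/extension mechanism is exactly the standard argument and it is correct: extension (adjoin $r(v)=1$ to a minimal Roman function on $G\setminus\{v\}$) gives the universal bound $\gamma_R(G)\le\gamma_R(G\setminus\{v\})+1$, and restriction of a minimal partition with $v\in V_1$ gives $\gamma_R(G\setminus\{v\})\le\gamma_R(G)-1$; the two inequalities together yield both implications, once ``v-critical'' is read as the paper intends.

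The obstacle you flag at the end is real, but it dissolves in a way that invalidates your \emph{primary} forward argument. In the definition of v-critical the paper overloads the symbol $n$: there it denotes the value $\gamma_R(G)$, not the order $|V(G)|$. This is forced by the paper's own usage: immediately afterwards $C_5$ is called a nonelementary v-critical graph with $\gamma_R(C_5)=4<5=|V(C_5)|$, and the proof of Lemma \ref{lema: carac} invokes v-criticality precisely in the form $\gamma_R(G\setminus\{x\})=\gamma_R(G)-1$. So v-critical means only that deleting any vertex drops $\gamma_R$ by exactly one; no condition $\gamma_R(G)=|V(G)|$ is intended. Indeed, under the literal reading you entertain, the lemma would be \emph{false}: in $C_5$ every vertex lies in $V_1$ of some minimal partition (rotate the pattern $2,0,1,1,0$), yet $\gamma_R(C_5)\neq |V(C_5)|$. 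Consequently your argument ``$r\equiv 1$ has weight $\gamma_R(G)$, hence is minimal with $V_1=V(G)$'' must be discarded — the all-ones function is minimal only for elementary graphs. The parenthetical alternative you give is the correct forward implication: by criticality $\gamma_R(G\setminus\{v\})=\gamma_R(G)-1$, so extending a minimal Roman function on $G\setminus\{v\}$ by $r(v)=1$ produces a Roman function on $G$ of weight $\gamma_R(G)$ with $v\in V_1$, which is therefore a minimal partition. With that branch promoted to the main argument and the all-ones branch deleted, your proof is complete and is the expected one.
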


\begin{definicion} A graph $G$ is \emph{nonelementary} if $\gamma_R(G)<|V(G)|$.
\end{definicion}

\paragraph{Cycles.}

Let $C_n$ by a cycle of length $n$. Then, $C_n$ is $v$-critical if and only if $n = 1, 2 \ mod(3)$. Moreover, if $n = 3k +1$, then $\gamma_R(C_n) = 2k +1$ and if $n = 3k +2$,
then $\gamma_R(C_n) = 2k + 2$. 

\noindent In particular, the cycle of length 5, $C_5$, is an example of a  nonelementary v-critical graph with $\gamma_R(G)=4$.

The following proposition gives a characterization of nonelementary graphs.

\begin{prop} A graph $G$ is nonelementary if and only if there is a connected component with at least 3 vertices. In particular, if $|V(G)|\geq 3$ and $G$ is connected, then it is nonelementary.
\end{prop}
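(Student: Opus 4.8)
The plan is to reduce the statement to a single connected component via the additivity of $\gamma_R$, and then to analyze connected graphs by their number of vertices. First I would record two elementary facts. Assigning the value $1$ to every vertex is a Roman domination function, since the defining condition is vacuous when no vertex receives value $0$; hence $\gamma_R(G)\le n$ for every graph $G$, and the same bound applies componentwise. Second, if $G_1,\dots,G_k$ are the connected components of $G$ with $n_i=|V(G_i)|$, then $\gamma_R(G)=\sum_{i=1}^{k}\gamma_R(G_i)$. For ``$\le$'', the union of minimal Roman functions on the $G_i$ is a Roman function on $G$, because no edge runs between distinct components. For ``$\ge$'', restricting a minimal Roman function on $G$ to each $G_i$ again gives a Roman function on $G_i$, since any value-$0$ vertex has a value-$2$ neighbor which necessarily lies in the same component.

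Given these facts and $\gamma_R(G_i)\le n_i$ for every $i$, the strict inequality $\gamma_R(G)<n=\sum_i n_i$ holds if and only if $\gamma_R(G_i)<n_i$ for at least one component. So the proposition will follow once I prove that a \emph{connected} graph $H$ on $m$ vertices satisfies $\gamma_R(H)<m$ if and only if $m\ge 3$. For $m=1$ the single vertex cannot receive value $0$ (it has no value-$2$ neighbor), forcing weight $1=m$. For $m=2$, that is $H=K_2$, any value-$0$ vertex forces its unique neighbor to take value $2$, so the weight is at least $2$, and the all-$1$ function attains it; hence $\gamma_R(H)=2=m$. In both cases $H$ is elementary and contributes no strict drop.

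For $m\ge 3$ with $H$ connected, the key step is to exhibit a Roman function of weight $m-1$. Here I would use that a connected graph on at least three vertices has a vertex $v$ of degree at least $2$: otherwise every vertex would have degree $\le 1$, forcing $H$ to be a single vertex or a single edge. Choosing two neighbors $u_1,u_2$ of $v$, I set $r(v)=2$, $r(u_1)=r(u_2)=0$, and $r(w)=1$ for the remaining $m-3$ vertices. The only value-$0$ vertices are $u_1,u_2$, both adjacent to the value-$2$ vertex $v$, so $r$ is a valid Roman domination function of weight $2+(m-3)=m-1<m$, giving $\gamma_R(H)<m$. This completes the characterization, and the ``in particular'' clause is exactly the case $k=1$ with $m=n\ge 3$.

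The main obstacle is the additivity step and its correct use: one must verify that the domination condition genuinely decouples across connected components so that both inequalities $\gamma_R(G)\le\sum_i\gamma_R(G_i)$ and $\gamma_R(G)\ge\sum_i\gamma_R(G_i)$ hold. Once that is secured, the remainder is a short case analysis for $m\le 2$ together with the single explicit construction for $m\ge 3$.
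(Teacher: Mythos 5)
Your proof is correct, and your forward direction (a component with at least $3$ vertices implies nonelementary) coincides exactly with the paper's: find a vertex $v$ of degree at least $2$, assign $r(v)=2$, two of its neighbors $0$, and $1$ everywhere else, giving weight $n-1$. The two proofs diverge on the converse. The paper argues directly from a minimal Roman function of weight less than $n$: since the weight is $|V_1|+2|V_2|$ and $n=|V_0|+|V_1|+|V_2|$, weight $<n$ forces $|V_2|<|V_0|$, so by pigeonhole some vertex labelled $2$ must have at least two neighbors labelled $0$, and the component containing it has at least $3$ vertices. You instead prove additivity of $\gamma_R$ over connected components, $\gamma_R(G)=\sum_i \gamma_R(G_i)$, and then settle the connected case by size: connected graphs on $m\le 2$ vertices are elementary, while $m\ge 3$ guarantees a vertex of degree at least $2$ and hence the weight-$(m-1)$ function. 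Your route is longer but buys a reusable decomposition lemma and reduces the whole proposition cleanly to connected graphs; the paper's pigeonhole argument is shorter and needs no such lemma, at the cost of leaving the counting step implicit. Both arguments are complete and valid.
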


\begin{proof} Let $C$ be a connected component of $G$ with at least 3 vertices. Then, there is a vertex $v$ with $deg(v)\geq 2$. Consider two edges $[v,w], [v,u]$. Thus, labelling $r(v)=2, r(w)=0, r(u)=0$ and $r(x)=1$ for every $x\neq u,v,w$ we obtain that $\gamma_R(G)<|V(G)|$.

If $G$ is nonelementary, then there is some Roman domination function $r$, some vertex $v$ with $r(v)=2$ and at least two vertices adjacent to $v$ labelled by 0. Therefore, the connected component containing $v$ has at least 3 vertices.
\end{proof}

\begin{remark} \label{R(G)3} $\gamma_R(G)\leq 3$ if and only if there exist a vertex $v\in V(G)$ with $deg(v)\geq n-2$
\end{remark}

 The following proposition was also shown in \cite{Hansberg}.

\begin{prop}\label{Prop: R-complete2} A graph $G=(V(G),E(G))$ is Roman saturated if and only if for every pair of vertices $v,w$ such that $[v,w]\not \in E(G)$ there exists a minimal Roman partition $(V_0;V_1;V_2)$ such that  $v\in V_1$ and $w\in V_2$ or $w\in V_1$ and $v\in V_2$.
\end{prop}

\begin{lema}\label{Prop: e-crit-n} Let $G=(V(G),E(G))$ be a v-critical graph with $\gamma_R(G)=n$. If $e\in E(G)$ and $G'=(V(G),E(G)\backslash \{e\})$, then $\gamma_R(G')=n$. 
\end{lema}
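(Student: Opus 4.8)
The plan is to prove this by a short squeeze argument, bounding $\gamma_R(G')$ from below and from above by $n$ and concluding equality. The v-criticality hypothesis enters only through the fact that it forces $\gamma_R(G)=n$ with $n=|V(G)|$; indeed, by definition a v-critical graph satisfies $\gamma_R(G)=n=|V(G)|$, so everything below uses only this equality together with elementary monotonicity.

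First I would establish the lower bound. Since $G'=(V(G),E(G)\backslash\{e\})$ has the same vertex set as $G$ and $E(G')\subset E(G)$, the graph $G'$ is exactly of the form discussed in the observation preceding Definition 0.2: any Roman domination function on $G'$ remains a Roman domination function on $G$, because each vertex labelled $0$ that has a neighbour labelled $2$ in $G'$ still has that neighbour in $G$ (edges are only deleted, never added). Consequently $\gamma_R(G)\leq\gamma_R(G')$, and since $\gamma_R(G)=n$ we obtain $n\leq\gamma_R(G')$.

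Next I would supply the matching upper bound, which is immediate for any graph on $n$ vertices: the constant function $r\equiv 1$, that is, the Roman partition $(\varnothing;V(G');\varnothing)$, is always a valid Roman domination function, since it has no vertex labelled $0$ and so satisfies the defining condition vacuously. Its weight is $\sum_{v\in V(G')}1=n$, whence $\gamma_R(G')\leq n$. Combining the two bounds gives $n\leq\gamma_R(G')\leq n$, so $\gamma_R(G')=n$, as claimed.

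There is no genuinely hard step here: both inequalities are elementary, one being the edge-deletion monotonicity already recorded in the text and the other the trivial all-ones bound. The only point worth checking carefully is that the monotonicity is applied in the correct direction, namely that deleting an edge can only increase (or preserve) $\gamma_R$, never decrease it; once that is pinned down, the result follows by squeezing. It is worth remarking that the argument in fact uses only $\gamma_R(G)=|V(G)|$, so the conclusion holds for any graph whose Roman domination number equals its order, with v-criticality being more than is strictly needed.
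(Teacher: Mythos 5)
Your proof is correct, but your upper bound goes by a genuinely different route than the paper's. Both arguments share the lower bound $\gamma_R(G')\geq\gamma_R(G)=n$ via edge-deletion monotonicity. For the upper bound, the paper writes $e=[v,w]$ and invokes Lemma \ref{Lemma: v-crit_0} to produce a minimal Roman partition $(V_0;V_1;V_2)$ of $G$ with $v\in V_1$; since the deleted edge then has an endpoint labelled $1$, it is never used to dominate a $0$-vertex, so the same partition is Roman on $G'$ and $\gamma_R(G')\leq n$. You instead use the all-ones function, whose weight is $|V(G')|$, together with the fact that in this paper v-criticality \emph{by definition} includes $\gamma_R(G)=n=|V(G)|$; this gives $\gamma_R(G')\leq n$ with no appeal to Lemma \ref{Lemma: v-crit_0} at all. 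What your route buys: it is shorter and proves a more general fact, namely that edge deletion preserves $\gamma_R$ for \emph{any} graph whose Roman domination number equals its order, criticality entering only through that equality. What the paper's route buys: it never uses $n=|V(G)|$, only the characterization in Lemma \ref{Lemma: v-crit_0}, so it would survive under the weaker, more standard notion of vertex criticality (where one merely requires $\gamma_R(G\backslash\{v\})<\gamma_R(G)$ and $n$ denotes $\gamma_R(G)$ rather than the order, as in the reference \cite{Hansberg}); moreover it exhibits a concrete minimal Roman partition of $G'$ that avoids the deleted edge, which is exactly the structural information reused in the e-criticality analysis of Proposition \ref{Prop: e-crit2}.
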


\begin{proof} We know that $\gamma_R(G')\geq \gamma_R(G)=n$. Consider any $e=[v,w]\in E(G)$. By Lemma \ref{Lemma: v-crit_0}, there is a minimal Roman partition of $G$, $P=(V_0;V_1;V_2)$, such that $v\in V_1$. Hence, $P$ is a Roman partition of $G'$ and $\gamma_R(G')\leq n$. Therefore, $\gamma_R(G')=n$.
\end{proof}

Hence, it makes sense to define $e$-critical graphs as follows: 

\begin{definicion} A v-critical graph $G=(V(G),E(G))$ is \emph{Roman critical on the edges} or \emph{e-critical} if for every edge $e\in E(G)$ the graph $G'=(V(G),E(G)\setminus \{e\})$ is not v-critical.
\end{definicion}
 

\begin{prop}\label{Prop: e-crit2} A v-critical graph $G=(V(G),E(G))$ is e-critical if and only if for every edge $e$ there exists a vertex $v_e$ such that for any minimal Roman partition $(V_0;V_1;V_2)$ with $v_e\in V_1$ then $e=[v,w]$ with $v\in V_0$ and $N[v]\cap V_2=\{w\}$.
\end{prop}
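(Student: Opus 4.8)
The plan is to reduce the $e$-criticality of $G$ to a condition on the minimal Roman partitions of $G$ itself, by first understanding precisely how deleting a single edge transforms the set of minimal Roman partitions. By Lemma \ref{Prop: e-crit-n} we already know that $\gamma_R(G')=n$ for $G'=(V(G),E(G)\setminus\{e\})$, so $G'$ carries the correct Roman domination number to be a candidate for $v$-criticality, and by Lemma \ref{Lemma: v-crit_0} deciding whether $G'$ is $v$-critical is exactly the question of whether every vertex can be placed in $V_1$ of some minimal Roman partition of $G'$. So the whole argument reduces to relating the minimal partitions of $G'$ to those of $G$.

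The key step I would isolate is the following identification: the minimal Roman partitions of $G'$ are precisely the minimal Roman partitions $(V_0;V_1;V_2)$ of $G$ that remain valid Roman functions after deleting $e$. Indeed, since $G'$ is a spanning subgraph of $G$, every Roman function on $G'$ is a Roman function on $G$ (as noted in the discussion following the definition of a Roman partition), and because $\gamma_R(G')=\gamma_R(G)=n$, the weight-$n$ Roman functions of $G'$ are exactly those weight-$n$ Roman functions of $G$ that survive the deletion. I would then pin down when a given minimal partition fails to survive: writing $e=[v,w]$, deletion can only endanger the domination of a vertex lying in $V_0$, and it does so precisely when one endpoint, say $v$, is in $V_0$ while the other, $w$, was its unique $V_2$-neighbor, that is $N[v]\cap V_2=\{w\}$. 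Conversely, if no such configuration occurs for $e$, every $V_0$-vertex retains a $V_2$-neighbor in $G'$, so the partition is still valid. This yields the clean dichotomy I need.

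With this identification in hand, I would negate the criterion of Lemma \ref{Lemma: v-crit_0} applied to $G'$: the graph $G'$ fails to be $v$-critical exactly when there is a vertex $v_e$ admitting no minimal Roman partition of $G'$ with $v_e\in V_1$. Using the identification above, and recalling that the $v$-criticality of $G$ guarantees (again via Lemma \ref{Lemma: v-crit_0}) that at least one minimal partition of $G$ does place $v_e$ in $V_1$, this says exactly that every minimal Roman partition $(V_0;V_1;V_2)$ of $G$ with $v_e\in V_1$ fails to survive the deletion of $e$, i.e.\ satisfies $e=[v,w]$ with $v\in V_0$ and $N[v]\cap V_2=\{w\}$. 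Finally, $G$ is $e$-critical iff this holds for every edge $e$, which is precisely the asserted condition.

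The main obstacle I anticipate is the careful verification of the dichotomy in the second step — establishing in both directions that a minimal partition of $G$ survives the deletion of $e$ if and only if $e$ is not the edge joining some $V_0$-vertex to its unique $V_2$-dominator — together with keeping the nested quantifiers straight when negating the $v$-criticality condition for $G'$. Everything else is a bookkeeping translation between the partitions of $G$ and those of $G'$.
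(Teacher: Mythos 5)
Your proposal is correct and takes essentially the same route as the paper: both arguments combine Lemma \ref{Prop: e-crit-n}, Lemma \ref{Lemma: v-crit_0}, and the dichotomy that deleting $e=[v,w]$ invalidates a minimal Roman partition of $G$ precisely when $v\in V_0$ and $N[v]\cap V_2=\{w\}$. The only (minor, organizational) difference is that you apply Lemma \ref{Lemma: v-crit_0} to $G\setminus\{e\}$ symmetrically in both directions through your identification of its minimal partitions with the surviving minimal partitions of $G$, whereas the paper's forward implication instead unpacks the definition of v-criticality via the doubly-deleted graph $(G\setminus\{e\})\setminus\{v_e\}$ and weight-$(\gamma_R(G)-1)$ partitions of $G\setminus\{v_e\}$.
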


\begin{proof} If $G$ is e-critical and $e\in E(G)$, by \ref{Prop: e-crit-n}, $G\backslash \{e\}$ is not v-critical, this is, there exists $v_e$ such that $\gamma_R((G\backslash \{e\})\backslash \{v_e\})=n$ while $\gamma_R(G\backslash \{v_e\})=n-1$. Then, for any Roman partition of $G\backslash \{v_e\}$, $P=(V_0;V_1;V_2)$, with $\gamma_R(G\backslash \{v_e\})=n-1$ (in particular, for any Roman partition of $G$, $P'=(V'_0;V'_1;V'_2)$, with $\gamma_R(G)=n$ and $v_e\in V_1$), $(V_0;V_1;V_2)$ (resp. $(V'_0;V'_1\backslash \{v_e\};V'_2)$) is not Roman on $(G\backslash \{e\})\backslash \{v_e\}$ and, therefore, $e\in [V_0,V_2]$. Moreover, if $e=[v,w]$ with $v\in V_0$ the only vertex in $V_2$ adjacent to $v$ is $w$.

Now, consider any $e\in E(G)$ and let $v_e$ be such that for any minimal Roman partition $P=(V_0;V_1;V_2)$ with $v_e\in V_1$, then $e\in [V_0,V_2]$ and if $e=[v,w]$ with $v\in V_0$ then $w$ is the only vertex in $V_2$ which is adjacent to $v$. Hence, $P$ is not Roman for $G\backslash \{e\}$. Thus, there is no Roman partition $P=(V_0;V_1;V_2)$ on $G\backslash \{e\}$ with $v_e\in V_1$ and $\gamma(P)=n$. Hence, by Lemma \ref{Lemma: v-crit_0}, $G\backslash \{e\}$ is not v-critical. 
\end{proof}

\section{Roman domination number 4.}

It is immediate to check that the only elementary v-critical graphs with $\gamma_R(X)=4$ are $(V(G_1))=\{a,b,c,d\},E(G_1)=\emptyset)$, $(V(G_2)=\{a,b,c,d\},E(G_2)=\{[a,b]\})$ and 
$(V(G_3)=\{a,b,c,d\},E(G_3)=\{[a,b],[c,d]\})$.


\begin{lema}\label{lema: carac} Let $G$ be a nonelementary graph with $\gamma_R(G)=4$. Then, $G$ is v-critical if and only if for every $ x\in V(G)$ there exist two vertices $a_x,b_x$ so that $a_x\neq x\neq b_x$ and such that $N[a_x]=G\backslash \{x,b_x\}.$
\end{lema}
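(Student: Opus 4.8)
The plan is to use Lemma \ref{Lemma: v-crit_0} as the only bridge between v-criticality and the stated combinatorial condition, exploiting the rigid structure forced on Roman functions of weight exactly $4$. First I would record the possible shapes of a minimal Roman partition $(V_0;V_1;V_2)$: since $\gamma_R(G)=4$, the weight equation $|V_1|+2|V_2|=4$ leaves only $(|V_1|,|V_2|)\in\{(0,2),(2,1),(4,0)\}$. I would discard $(4,0)$ at once: it forces $V_2=\emptyset$, so every vertex of $V_0$ is an undominated $0$-vertex; but nonelementarity gives $n\ge 5$, hence $V_0\ne\emptyset$ and the partition cannot be Roman. Thus every minimal partition is of type $(0,2)$ (two $2$-vertices, no $1$-vertices) or $(2,1)$ (one $2$-vertex and two $1$-vertices).

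The core step is a structural claim about type $(2,1)$: writing such a partition as $(V_0;\{p,q\};\{a\})$, I would show $N[a]=V(G)\setminus\{p,q\}$. Being Roman forces the single $2$-vertex $a$ to dominate all of $V_0=V(G)\setminus\{a,p,q\}$, so $N[a]\supseteq V(G)\setminus\{p,q\}$; that is, the only vertices $a$ may fail to be adjacent to are $p$ and $q$. At the same time, since $\gamma_R(G)=4>3$, Remark \ref{R(G)3} forbids any vertex of degree $\ge n-2$, so $a$ has at least two non-neighbours. Together these pin the non-neighbours of $a$ down to exactly $\{p,q\}$, giving $N[a]=V(G)\setminus\{p,q\}$, which is precisely the shape in the statement.

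With this in hand the forward implication is short. Assuming $G$ is v-critical and fixing $x\in V(G)$, Lemma \ref{Lemma: v-crit_0} provides a minimal partition with $x\in V_1$; as $V_1\ne\emptyset$ this partition must be of type $(2,1)$, say $(V_0;\{x,b_x\};\{a_x\})$, and the structural claim yields $N[a_x]=V(G)\setminus\{x,b_x\}$, with $x,b_x,a_x$ pairwise distinct (since $V_1=\{x,b_x\}$ has two elements and $a_x\in V_2$). For the converse I would reverse the construction: given $x$ and the promised $a_x,b_x$, set $V_2=\{a_x\}$, $V_1=\{x,b_x\}$ and $V_0=V(G)\setminus\{a_x,x,b_x\}$. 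Since $N[a_x]=V(G)\setminus\{x,b_x\}$, the vertex $a_x$ is adjacent to every vertex of $V_0$, so this is a Roman partition; its weight is $|V_1|+2|V_2|=4=\gamma_R(G)$, hence it is minimal, and $x\in V_1$. As $x$ was arbitrary, Lemma \ref{Lemma: v-crit_0} gives that $G$ is v-critical.

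The hard part will be the structural claim of the second paragraph, specifically the use of Remark \ref{R(G)3} to force the two non-neighbours of the lone $2$-vertex to be exactly the two $1$-vertices, together with the clean elimination of the degenerate $(|V_1|,|V_2|)=(4,0)$ case via nonelementarity. Once these are established, each direction is an immediate application of Lemma \ref{Lemma: v-crit_0}.
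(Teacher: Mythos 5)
Your proof is correct, but it runs along a genuinely different track from the paper's. The paper never invokes Lemma \ref{Lemma: v-crit_0} here: it works directly with the definition of v-criticality, analyzing weight-$3$ Roman functions on the deleted graphs $G\setminus\{x\}$. In the forward direction it shows such a function must have shape (one $2$, one $1$, rest $0$) because nonelementarity forces a $0$-vertex, and it obtains the two non-adjacencies by local modification arguments: if the $2$-vertex and $1$-vertex were adjacent one could relabel to reach weight $2$ on $G\setminus\{x\}$, and if the $2$-vertex were adjacent to $x$ one could extend by $x\mapsto 0$ to reach weight $3$ on $G$, contradicting $\gamma_R(G)=4$. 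The converse is the same direct construction as yours, but read as giving $\gamma_R(G\setminus\{x\})=3$ rather than as producing a minimal partition of $G$ with $x\in V_1$. Your route instead stays entirely on $G$: you classify the weight-$4$ partition shapes $(|V_1|,|V_2|)\in\{(0,2),(2,1),(4,0)\}$, kill $(4,0)$ by nonelementarity, and extract both non-adjacencies at once from the degree cap $\deg\le n-3$ supplied by Remark \ref{R(G)3} (you use only the safe direction of that remark, so its edge cases for tiny $n$ are irrelevant), with Lemma \ref{Lemma: v-crit_0} serving as the bridge in both directions. What your version buys is uniformity and reusability: your structural claim about type-$(2,1)$ partitions is essentially the content of Theorem \ref{teo: carac2}, so your argument proves the later characterization en route. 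What the paper's version buys is self-containment: it needs neither Remark \ref{R(G)3} nor Lemma \ref{Lemma: v-crit_0}, and it directly certifies $\gamma_R(G\setminus\{x\})=3$, which is the defining property being verified. One cosmetic point you could make explicit: $a_x\neq b_x$ is automatic, since $a_x\in N[a_x]$ while $N[a_x]=V(G)\setminus\{x,b_x\}$ excludes $b_x$; your construction of the partition $(V_0;\{x,b_x\};\{a_x\})$ tacitly uses this.
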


\begin{proof} The if part is clear. For every $x \in V(G)$, it suffices to define $r_x \colon V(G)\backslash\{x\} \to \{0,1,2\}$
so that $r_x(a_x) = 2$, $r_x(b_x) = 1$ and $r_x(y) = 0$  for all $y\neq a_x,b_x$. Then, $r_x$ is Roman and 
$\gamma_R(G\backslash \{x\})\leq 3$. (In fact, since $\gamma_R (G) = 4$, $\gamma_R(G\backslash \{x\}) = 3$).

Now let $G$ be a nonelementary v-critical graph with $\gamma_R(G) = 4$. Since it is
v-critical, for any vertex $x\in V(G)$ there is a Roman function $r_x \colon V(G)\backslash \{x\} \to \{0,1,2\}$
such that $\sum_{v\in V(G)\backslash \{x\}}  r_x(v) = 3$. Since it is nonelementary, $|V(G)\backslash \{x\}|>3$ and 
hence, there are two
vertices $y, z\in V(G)\backslash \{x\}$ so that $r_x(y) = 2$ and $r_x(z) = 1$. Since $\gamma_R(G \backslash \{x\})= 3$, $[y,z]\notin E(G)$.

Then, let $a_x = y$ and $b_x = z$. If $[a_x,x]\in E(G)$, then $r'_x \colon V(G) \to \{0,1,2\}$ such that $r'_x|_{V(G)\backslash \{x\}}:=r_x$ and $r'_x(x) = 0$ is Roman and $\gamma_R(G)\leq 3$ which is a contradiction. Therefore, $N[a_x]=G\backslash \{x,b_x\}.$
\end{proof}

\begin{cor}\label{Cor: carac} Let $G$ be a nonelementary v-critical graph of order $n$ and $\gamma_R(G)=4$ and consider $x,a_x,b_x$ as above. Then, either $N[x]=V(G)\backslash \{a_x,y\}$ or $N[b_x]=V(G)\backslash \{a_x,y\}$ for some $y\in V(G)$.
\end{cor}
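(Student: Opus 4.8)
The plan is to apply the characterization Lemma \ref{lema: carac} not to $x$ but to the high-degree vertex $a_x$ itself, and then to match the resulting dominating vertex against $x$ and $b_x$ using a single adjacency/non-adjacency comparison.

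First recall the data handed to us: $N[a_x]=V(G)\backslash\{x,b_x\}$, so $a_x$ is adjacent to every vertex except $x$ and $b_x$; in particular $a_x$ is adjacent to neither $x$ nor $b_x$. Since $G$ is a nonelementary v-critical graph with $\gamma_R(G)=4$, Lemma \ref{lema: carac} applies verbatim with $a_x$ playing the role of the arbitrary vertex: there exist $p,q\in V(G)$ with $p\neq a_x\neq q$ and $N[p]=V(G)\backslash\{a_x,q\}$. In particular $a_x\notin N[p]$, i.e. $p$ is not adjacent to $a_x$.

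The key step is to show $p\in\{x,b_x\}$. Suppose not, so that $p$ is a vertex different from $x$, $b_x$ and $a_x$. Then $p\in V(G)\backslash\{x,b_x\}=N[a_x]$, and since $p\neq a_x$ this forces $p$ to be adjacent to $a_x$, whence $a_x\in N[p]$; this contradicts $a_x\notin N[p]$. Therefore $p=x$ or $p=b_x$. Setting $y:=q$ (note $q\neq a_x$, so $\{a_x,y\}$ really is a pair of distinct excluded vertices), the equality $N[p]=V(G)\backslash\{a_x,y\}$ is precisely the desired conclusion: it reads $N[x]=V(G)\backslash\{a_x,y\}$ when $p=x$, and $N[b_x]=V(G)\backslash\{a_x,y\}$ when $p=b_x$.

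I expect the only genuine obstacle to be the idea of feeding $a_x$, rather than $x$, into Lemma \ref{lema: carac}; once that is done, the identification of $p$ with $x$ or $b_x$ is a one-line check playing ``$p\in N[a_x]$'' against ``$a_x\in N[p]$'' via the symmetry of adjacency. (Should one prefer to avoid reusing the lemma, the same $p,q$ can instead be extracted directly from a weight-$3$ minimal Roman function on $G\backslash\{a_x\}$, guaranteed by v-criticality, with the extra observation that if $p$ were adjacent to $a_x$ then $N[p]=V(G)\backslash\{q\}$ would yield a weight-$3$ Roman function on $G$, contradicting $\gamma_R(G)=4$; this alternative route leads to the identical conclusion.)
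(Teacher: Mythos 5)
Your proposal is correct and follows essentially the same route as the paper: the paper's proof likewise applies Lemma \ref{lema: carac} to the vertex $a_x$ and observes that, since $a_x$ is adjacent to every vertex outside $\{x,b_x\}$, the new dominating vertex (your $p$, the paper's $a_{a_x}$) must lie in $\{x,b_x\}$. Your write-up merely spells out the one-line adjacency contradiction that the paper leaves implicit.
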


\begin{proof} It suffices to apply again the proposition to $a_x$. There are edges from $a_x$ to every vertex different from $x,b_x$. Therefore, $a_{a_x}\in \{x,b_x\}$.
\end{proof}

Then, lemma \ref{lema: carac} implies the following useful Theorem.  

\begin{theorem}\label{teo: carac2} Let $G$ be a nonelementary graph of order $n$ and Roman domination number $\gamma_R(G)=4$. Then, $G$ is v-critical if and only if for every 
$x\in V(G)$ there exists a nonadjacent vertex $a_x$ with $deg(a_x)=n-3$. 
\end{theorem}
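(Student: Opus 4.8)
The plan is to obtain this degree characterization directly from Lemma \ref{lema: carac} by reinterpreting the closed-neighborhood condition $N[a_x]=G\backslash\{x,b_x\}$ as a statement about $\deg(a_x)$. So the whole theorem reduces to showing that, for a fixed $x$, the existence of $a_x,b_x$ with $N[a_x]=V(G)\backslash\{x,b_x\}$ is equivalent to the existence of a vertex $a_x$ nonadjacent to $x$ with $\deg(a_x)=n-3$.

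For the forward implication, I would assume $G$ is v-critical, fix $x\in V(G)$, and invoke Lemma \ref{lema: carac} to get $a_x,b_x$ with $N[a_x]=V(G)\backslash\{x,b_x\}$. Since $x\notin N[a_x]$, the vertex $a_x$ is not adjacent to $x$ (and $a_x\neq x$), which already produces the required nonadjacent vertex. A cardinality count then gives the degree: $|N[a_x]|=n-2$, and because $a_x$ lies in its own closed neighborhood, $\deg(a_x)=|N[a_x]|-1=n-3$.

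For the converse, I would assume that for every $x$ there is a vertex $a_x$ nonadjacent to $x$ with $\deg(a_x)=n-3$, and recover $b_x$ purely by counting. Since $a_x\in N[a_x]$, we have $|N[a_x]|=\deg(a_x)+1=n-2$, so exactly two vertices of $V(G)$ lie outside $N[a_x]$. As $a_x$ is nonadjacent to $x$, the vertex $x$ is one of those two; let $b_x$ be the other. Then $N[a_x]=V(G)\backslash\{x,b_x\}$, and since $a_x\in N[a_x]$ while $x,b_x\notin N[a_x]$, the vertices $x,a_x,b_x$ are pairwise distinct, so the hypotheses of Lemma \ref{lema: carac} are met and $G$ is v-critical.

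The argument is a routine counting translation, so no step is genuinely hard; the only point demanding care is the bookkeeping that precisely two vertices are excluded from $N[a_x]$ and that one of them is forced to be $x$, which is what pins down $b_x$ unambiguously. I would also remark that nonelementarity ensures $n$ is large enough for the degree $n-3$ to be meaningful and for $b_x$ to be distinct from $a_x$ and $x$.
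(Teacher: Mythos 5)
Your proof is correct and follows exactly the route the paper intends: the paper offers no separate argument, stating only that Lemma \ref{lema: carac} implies the theorem, and your counting translation (both directions, via $|N[a_x]|=n-2$) is precisely the routine verification being left implicit. The bookkeeping you supply — that nonadjacency forces $x$ to be one of the two excluded vertices, pinning down $b_x$ and guaranteeing $x$, $a_x$, $b_x$ are pairwise distinct — is the right way to make that implication explicit.
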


The following Proposition establishes that in a v-critical graph with Roman domination number four at least half of the vertices  have degree $n-3$.

\begin{prop} Let $G$ be a nonelementary v-critical graph of order $n$ and Roman domination number $\gamma_R(G)=4$. 
If $V_1:=\{ v \in V(G) \ | \ deg(v)=n-3\}$, then $|V_1|\geq \frac{n}{2}$.
\end{prop}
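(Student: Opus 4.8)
The plan is to exploit the ``partner map'' $x \mapsto a_x$ furnished by Theorem~\ref{teo: carac2} and to count its fibers. For each $x \in V(G)$, Theorem~\ref{teo: carac2} guarantees a vertex $a_x$ that is nonadjacent to $x$ and satisfies $\deg(a_x) = n-3$; in particular $a_x \in V_1$. Fixing one such choice for every $x$ defines a function $\phi \co V(G) \to V_1$ by $\phi(x) = a_x$. Note that $\phi$ is well defined since nonelementarity together with $\gamma_R(G)=4$ forces $n \geq 5$, so that $\deg(a_x)=n-3 \geq 2$ makes sense and $V_1$ is the intended target.

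The key observation is that each vertex of $V_1$ has a small fiber under $\phi$. If $v \in V_1$, then $\deg(v) = n-3$, so $|N[v]| = n-2$, and hence $v$ fails to be adjacent to exactly two vertices of $V(G)\setminus\{v\}$; call this set $S_v$, so $|S_v| = 2$. Since $a_x$ is required to be nonadjacent to $x$ and $a_x \neq x$, whenever $\phi(x) = v$ we must have $x \in S_v$. Therefore $\phi^{-1}(v) \subseteq S_v$, and each fiber has at most two elements.

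Counting then finishes the argument: the fibers $\phi^{-1}(v)$ for $v$ in the image $\phi(V(G)) \subseteq V_1$ partition $V(G)$, so
\[
n = |V(G)| = \sum_{v \in \phi(V(G))} |\phi^{-1}(v)| \leq 2\,|\phi(V(G))| \leq 2\,|V_1|,
\]
which gives $|V_1| \geq \frac{n}{2}$. The only delicate point is the fiber bound, and it rests entirely on translating the degree condition $\deg(v) = n-3$ into the statement that $v$ has precisely two non-neighbors, combined with the requirement built into the partner map that $a_x$ be nonadjacent to $x$. Because $\phi$ is a genuine function (exactly one $a_x$ is chosen per $x$), no vertex is overcounted and the inequality is immediate; I do not expect any further obstruction beyond verifying these two facts.
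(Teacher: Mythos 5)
Your proof is correct and uses essentially the same argument as the paper: both rest on the partner map $x\mapsto a_x$ from Theorem~\ref{teo: carac2} together with the observation that a vertex of degree $n-3$ has exactly two non-neighbors. The paper organizes the count as a greedy covering (repeatedly choosing a vertex $x_i$ outside the non-neighbor pairs already seen, whose partner $a_i$ is then necessarily new, and stopping once the pairs cover $V(G)$), while your fiber count of $\phi$ is the same double count in a cleaner, non-iterative packaging.
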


\begin{proof} By Theorem \ref{teo: carac2} we know that there is at least one vertex, $a_1$ such that $deg(a_1)=n-3$. Then, there are exactly two vertices $x_1,y_1$ so that $N[a_1]=V(G)\backslash \{x_1,y_1\}$. Consider any point $x_2\not \in \{x_1,y_1\}$. By the proof of  Theorem \ref{teo: carac2} there is some vertex $a_2$ such that $N[a_2]=V(G)\backslash \{x_2,y_2\}$ for some $y_2$ (not necessarilly different from $x_1,y_1$). Since $x_2\not \in \{x_1,y_1\}$, then $a_2\neq a_1$ and $|\{x_1,y_1,x_2,y_2\}|\leq 4$ . Then, we consider some vertex $x_3\not \in \{x_1,y_1,x_2,y_2\}$ and repeat the process until $\{x_1,y_1,...,x_k,y_k\}=V(G)$. Then $k\geq \frac{n}{2}$ and $a_1,...,a_k$ are $k$ different vertices with degree $n-3$.  
\end{proof}

\begin{nota} There is an infinite family of different nonelementary v-critical
graphs with $\gamma_R(G) = 4$.
\end{nota}

\begin{ejp}\label{Ejp: cicle} Consider a cicle, $C_n$, of length $n \geq 5$ were the vertices $V(C_n)$ are ordered in the natural way by
 $x_1,...,x_n$, and  $x_i,x_{i+1}\in E(C_n)$ for every $i=1,\dots ,n,  (mod \, n)$.  As we already know, if $n=5$ $\gamma_R(C_5)=4$ and it is critical. 
 Assume  $n \geq 6$ let us define $X_n$ by attaching some extra
edges to $C_n$ so that, for every vertex $x_i$, 
$N[x_i] = V(C_n)\backslash \{x_{i-2}, x_{i+2}\}$ (were the vertices are taken (mod \,  n))
See Figure \ref{Figure 1}. By Lemma \ref{lema: carac}, the resulting graph
$X_n$ is a nonelementary v-critical graph with $\gamma_R(X) = 4$.
\end{ejp}

\begin{figure}[ht]
\centering
\includegraphics[scale=0.4]{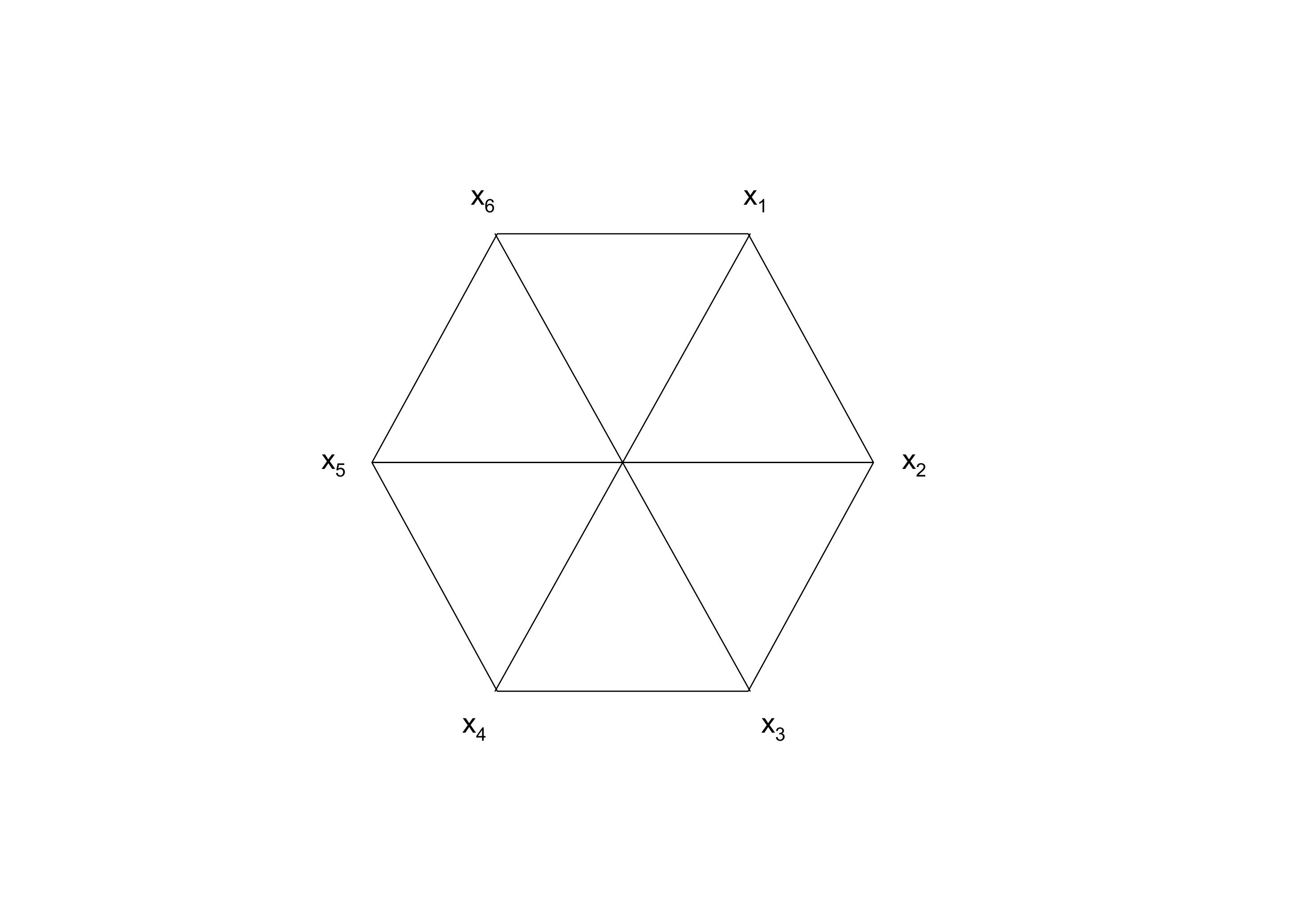}
\caption{$X_6$ is v-critical.}
\label{Figure 1}
\end{figure}

Observe that these are not the only examples of v-critical graphs with Roman domination number four. Take any of the $X_n$ above, with $n \geq 6$. Then,
if we remove any of those extra edges (imagine an hexagon with two
diagonals, $[x_1, x_4]$ and $[x_2, x_5]$, for example) it is still a nonelementary v-critical
graph with $\gamma_R(G) = 4$. 
Furthermore there are examples of nonelementary critical graphs with cut vertices. 

\begin{lema} If $G$ is a nonelementary v-critical graph with $\gamma_R(G) = 4$ and $v$ is a cut vertex,
then one of the connected components has a unique
vertex.
\end{lema}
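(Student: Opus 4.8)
The plan is to exploit the strong structural information coming from Theorem~\ref{teo: carac2} and Lemma~\ref{lema: carac}: every vertex of $G$ has a non-adjacent companion of degree $n-3$, and such a companion is non-adjacent to exactly two vertices. Write the components of $G\setminus\{v\}$ as $C_1,\dots,C_k$ with $k\geq 2$. The crucial elementary observation is that for any $a\neq v$ all neighbours of $a$ in $G$ lie in $C(a)\cup\{v\}$, where $C(a)$ denotes the component containing $a$, since no edge of $G$ joins two distinct components of $G\setminus\{v\}$.

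First I would produce a vertex of degree $n-3$ distinct from $v$. Applying Theorem~\ref{teo: carac2} to $x=v$ yields a vertex $a$ non-adjacent to $v$ with $\deg(a)=n-3$; in particular $a\neq v$. Let $C=C(a)$. Every vertex outside $C\cup\{v\}$ is then a non-neighbour of $a$, and $a$ has exactly two non-neighbours, so $|V(G)\setminus(C\cup\{v\})|\leq 2$. Hence all components other than $C$ together contain at most two vertices. If one of them is a singleton we are done; if there are two of them, each is a singleton and we are again done; so the only remaining possibility is a single other component $D$ with $|D|=2$, forcing $|C|=n-3$ and $G\setminus\{v\}=C\cup D$.

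The hard part is to rule out this last configuration, and I expect it to be the main obstacle. Write $D=\{s,t\}$; as $D$ is a component of $G\setminus\{v\}$ we have $N_G(s)\subseteq\{t,v\}$ and $N_G(t)\subseteq\{s,v\}$, so $\deg(s),\deg(t)\leq 2$. Assume first $n\geq 6$, whence $n-3>2$. For each $c\in C$, Lemma~\ref{lema: carac} supplies $a_c$ with $\deg(a_c)=n-3$ and $N[a_c]=V(G)\setminus\{c,b_c\}$; thus $a_c\notin\{s,t\}$, and since $c\notin D$ at most one of $s,t$ (namely a possible $b_c$) fails to be adjacent to $a_c$. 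So $a_c$ is adjacent to at least one of $s,t$, and because the only neighbour of $s$ or $t$ outside $D$ is $v$, this forces $a_c=v$. Therefore $v$ is non-adjacent to every vertex of $C$, so $C$ is already a full component of $G$ and all neighbours of $v$ lie in $D$. A direct count then shows that $C$ contributes one component before and after deleting $v$, while $\{v\}\cup D$ contributes the same number of components as $D$ once $v$ is removed; hence deleting $v$ does not increase the number of components, contradicting that $v$ is a cut vertex.

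Finally I would dispose of $n=5$, where the bound forces both $C$ and $D$ to have two vertices. Here Remark~\ref{R(G)3} gives that every vertex has degree at most $n-3=2$, so in particular $\deg(v)\leq 2$; to join the two size-two components $v$ must have exactly one neighbour in each, and together with the two internal edges this forces $G$ to be the path $P_5$. A short computation gives $\gamma_R(P_5)=4$, while deleting the central vertex leaves two disjoint edges with $\gamma_R=2+2=4$, so this vertex does not lower the Roman number and $P_5$ is not v-critical, a contradiction. (Equivalently, the only nonelementary v-critical graph on five vertices is $C_5$, which is $2$-connected and has no cut vertex, so the statement is vacuous for $n=5$.) Combining the three cases yields the claim.
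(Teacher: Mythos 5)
Your proof is correct, but it takes a genuinely different and far heavier route than the paper's. The paper's own proof is a single observation: the Roman domination number of a disjoint union is the sum over components, and every component with at least two vertices has Roman domination number at least $2$; hence if $G\setminus\{v\}$ had two components with two or more vertices each, then $\gamma_R(G\setminus\{v\})\geq 4$, contradicting $\gamma_R(G\setminus\{v\})=3$, which v-criticality forces. You instead invoke the structural machinery of Theorem~\ref{teo: carac2} and Lemma~\ref{lema: carac}; this works, but it is overkill, and you miss a simplification inside your own argument: the vertex $a$ produced by Theorem~\ref{teo: carac2} applied to $x=v$ is \emph{nonadjacent to $v$}, so $v$ is one of the exactly two non-neighbours of $a$, and therefore $|V(G)\setminus(C\cup\{v\})|\leq 1$, not merely $\leq 2$. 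With this sharp count the components other than $C$ contain at most one vertex in total and the proof ends at once; your entire ``hard case'' $|D|=2$ is vacuous. (Its internal analysis is nevertheless logically sound, though even there the identity $a_c=v$ already yields $\deg(v)=n-3\geq 3$ while $N(v)\subseteq\{s,t\}$ forces $\deg(v)\leq 2$, an immediate contradiction that spares the component count.) One aside, however, is actually false: $C_5$ is \emph{not} the only nonelementary v-critical graph on five vertices with $\gamma_R=4$; the graph $C_4\cup K_1$ is another, since deleting any of its vertices leaves Roman domination number $3$. That graph has no cut vertex, so your conclusion for $n=5$ is unaffected, and your primary $P_5$ computation for that case is correct, but the parenthetical ``equivalently'' claim should be deleted.
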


\begin{proof} If $G\backslash \{v\}$ has two connected components
with at least two vertices, then $\gamma_R(G\backslash \{v\}) \geq 4$ and, therefore, $G$ is not v-critical.
\end{proof}

\begin{ejp} The graph represented on Figure \ref{Figure 2} has a cut vertex.


\begin{figure}[ht]
\centering
\includegraphics[scale=0.4]{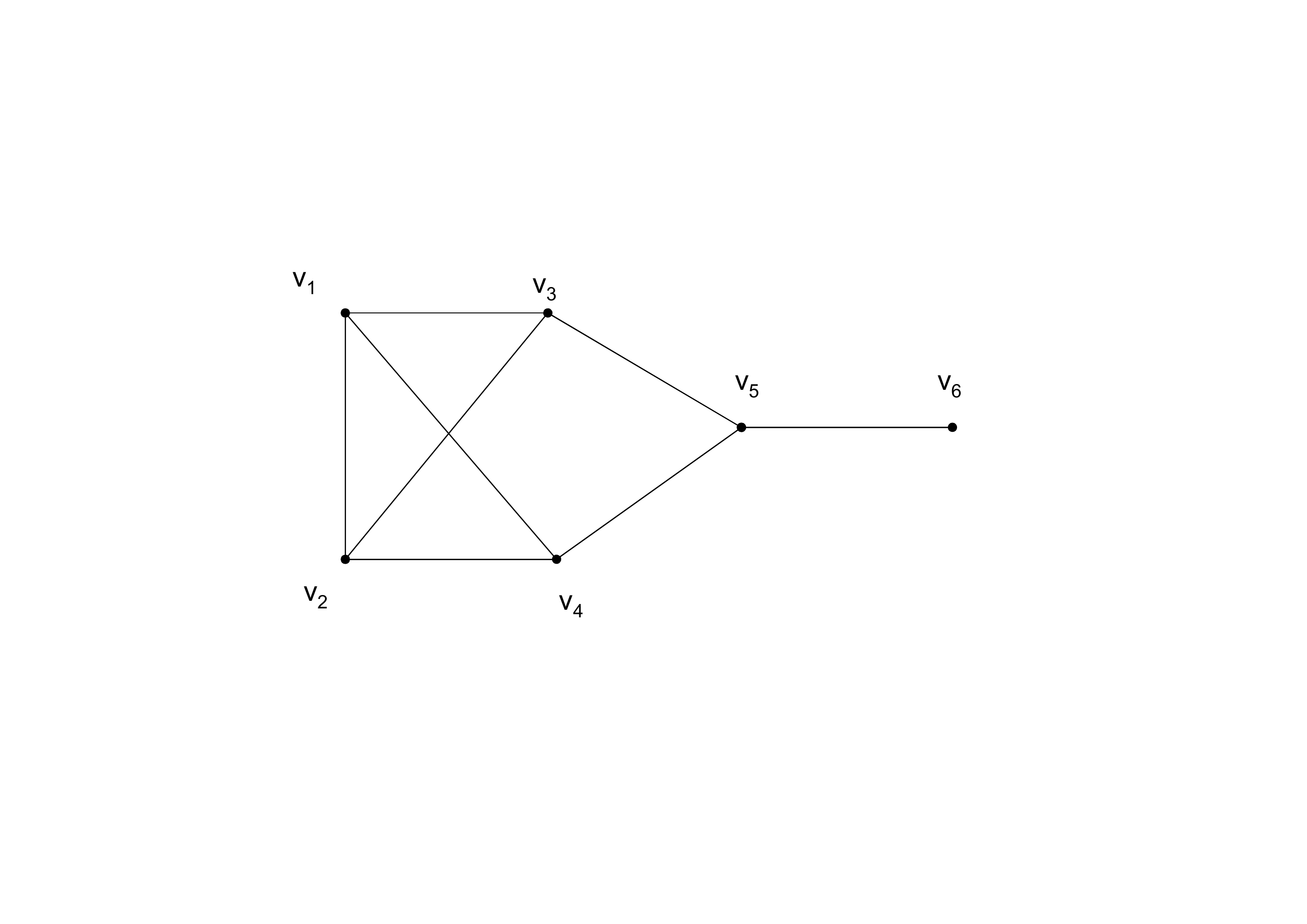}
\caption{A v-critical graph with a cut vertex $v_5$.}
\label{Figure 2}
\end{figure}
\end{ejp}

\begin{prop}\label{Prop: R-complete} A nonelementary graph $G=(V(G),E(G))$ of order $n$ and $\gamma_R(G)=4$ is Roman saturated if and only if any pair of vertices $v_1,v_2$ with $deg(v_i) < n-3$, $i=1,2$ then $[v_1,v_2]\in E(G)$. 
\end{prop}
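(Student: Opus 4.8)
The plan is to reduce the whole statement to Remark \ref{R(G)3}, which characterizes $\gamma_R \le 3$ purely in terms of a high-degree vertex. First I would record the baseline fact that, since $\gamma_R(G)=4$, Remark \ref{R(G)3} forces every vertex of $G$ to have $\deg(v)\le n-3$ (a vertex of degree $\ge n-2$ would give $\gamma_R(G)\le 3$). Since $G$ is nonelementary with $\gamma_R(G)=4$, we also have $n\ge 5$, which I will use freely.

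Next I would fix a nonadjacent pair $v,w$ and set $G'=(V(G),E(G)\cup[v,w])$. Adding a single edge raises the degrees of exactly $v$ and $w$, each by one, and leaves all other degrees unchanged; hence in $G'$ every vertex still has degree $\le n-2<n-1$, so $G'$ has no universal vertex and therefore $\gamma_R(G')\ge 3$. Applying Remark \ref{R(G)3} now in $G'$, we get $\gamma_R(G')\le 3$ if and only if $G'$ has a vertex of degree $\ge n-2$. Because the only vertices whose degree changed are $v$ and $w$, and all original degrees were $\le n-3$, such a vertex must be $v$ or $w$ with $\deg_G=n-3$. Combining the lower bound $\gamma_R(G')\ge 3$ with this, I obtain the key equivalence: for a nonadjacent pair $v,w$, one has $\gamma_R(G')=3$ if and only if $\deg_G(v)=n-3$ or $\deg_G(w)=n-3$.

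Finally I would unwind the definition of Roman saturated: $G$ is Roman saturated exactly when $\gamma_R(G')=\gamma_R(G)-1=3$ for every nonadjacent pair $v,w$. By the equivalence above this says precisely that every nonadjacent pair has at least one endpoint of degree $n-3$. Taking the contrapositive, and using that all degrees are $\le n-3$ so that ``$\deg<n-3$'' is exactly the negation of ``$\deg=n-3$'', this is the same as saying that no nonadjacent pair can have both endpoints of degree $<n-3$, i.e. any two vertices $v_1,v_2$ with $\deg(v_i)<n-3$ must satisfy $[v_1,v_2]\in E(G)$. That is the claimed characterization.

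The step I expect to be the main obstacle is the exact equality $\gamma_R(G')=3$ rather than merely $\gamma_R(G')\le 3$: one must rule out that adding the edge drops the Roman number all the way to $2$. I would dispatch this with the no-universal-vertex observation ($\deg\le n-2$ throughout $G'$) rather than invoking a general ``one edge lowers $\gamma_R$ by at most one'' principle, since the degree bound is already available and is exactly what Remark \ref{R(G)3} interacts with. Everything else is bookkeeping with the degree count, together with the harmless convention that a ``pair'' means two distinct vertices so that $[v_1,v_2]$ denotes a genuine edge.
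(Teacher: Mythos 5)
Your proof is correct and follows essentially the same route as the paper's: both directions reduce to Remark \ref{R(G)3} via the observation that $\gamma_R(G)=4$ forces every degree to be at most $n-3$, so adding an edge between two vertices of degree $<n-3$ leaves all degrees below $n-2$ (hence $\gamma_R(G')>3$), while adding an edge at a vertex of degree $n-3$ produces one of degree $n-2$ (hence $\gamma_R(G')\leq 3$). You are in fact slightly more careful than the paper, which writes ``$\gamma_R(G)=3$'' at the end and silently skips the lower bound $\gamma_R(G')\geq 3$; your no-universal-vertex observation is exactly the missing justification for that equality.
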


\begin{proof} Suppose $v,w$ with $deg(v),deg(w) < n-3$ and $[v,w]\notin E(G)$. Let $G'=(V(G),E(G)\cup[v,w])$. 
Clearly, every vertex in $G'$ has degree less or equal than $n-3$ then by Remark  \ref{R(G)3}, $\gamma_R(G')>3$ and $G$ is not Roman saturated.
Assume that for any pair of vertices $v_1,v_2$ with $deg(v_i) < n-3$, $i=1,2$, $[v_1,v_2]\in E(G)$. Let  $G'=(V(G),E(G)\cup[v,w])$ for some $v,w\in V(G)$. Then 
one of them, say $v$, satisfies that $deg(v)\geq n-3$ in $G'$. Thus, again by Remark  \ref{R(G)3}, $\gamma_R(G)=3$.
\end{proof}

\begin{figure}[ht]
\centering
\includegraphics[scale=0.5]{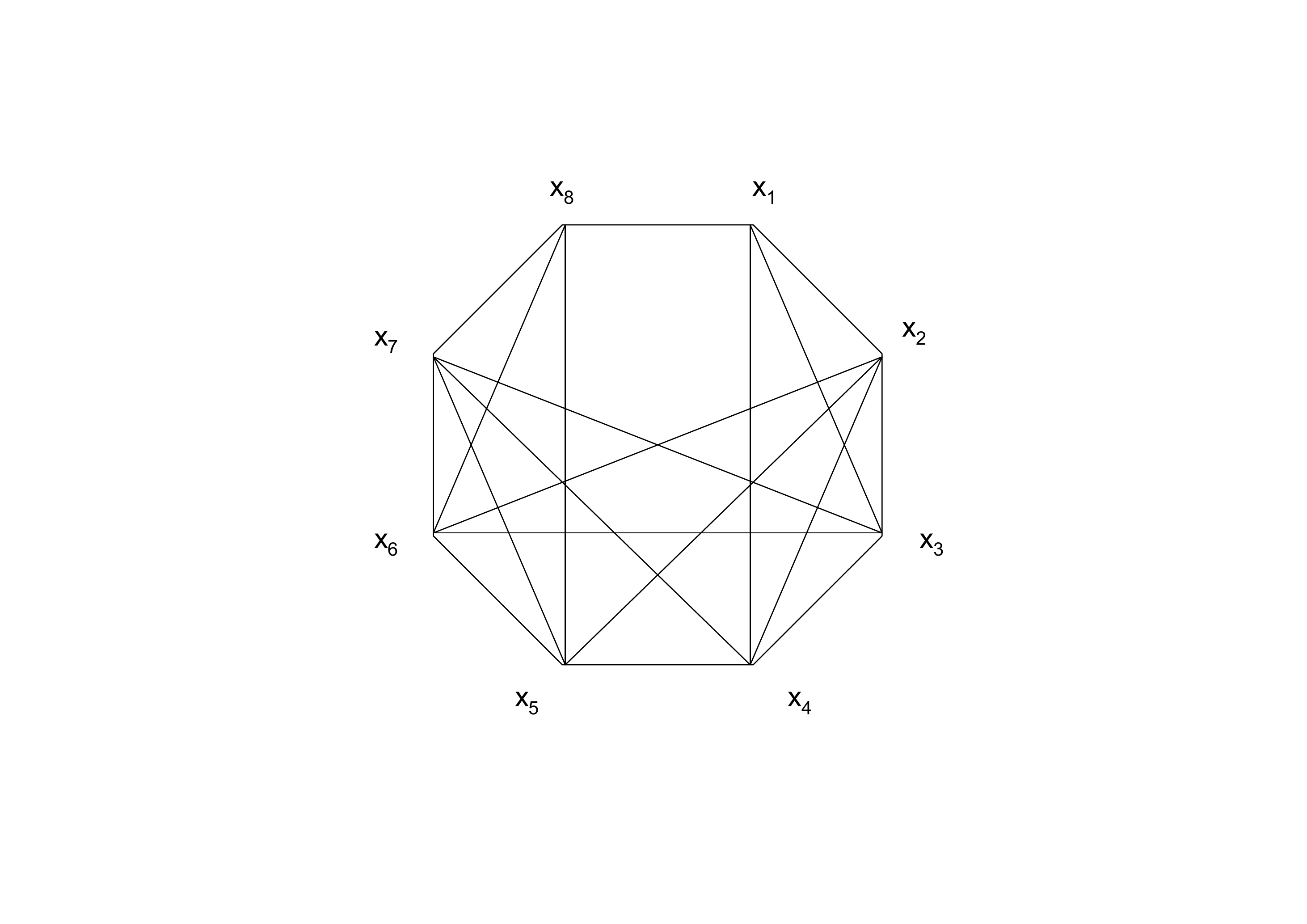}
\caption{A nonelementary v-critical Roman saturated graph with 8 vertices and two adjacent vertices of degree 4.}
\label{Octogono}
\end{figure}

\begin{lema} Let $G=(V(G),E(G))$ be a nonelementary Roman saturated v-critical graph of order $n$ and $\gamma_R(G)=4$. If $V_1:=\{v\in V(G) \ | \ deg(v)=n-3\}$, then $|V_1|\geq \frac{3n}{4}$.
\end{lema}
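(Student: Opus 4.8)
The plan is to set $V_{low} := V(G) \setminus V_1$ and prove $|V_{low}| \le n/4$, which is equivalent to the claim. First I would record a uniform degree bound: since $\gamma_R(G) = 4 > 3$, Remark \ref{R(G)3} forbids any vertex of degree $\ge n-2$, so every vertex of $G$ has degree at most $n-3$. Consequently $V_{low}$ consists exactly of the vertices of degree at most $n-4$, and each such vertex has at least $(n-1)-(n-4) = 3$ non-neighbours in $G$.

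Next I would use that $G$ is Roman saturated. By Proposition \ref{Prop: R-complete}, any two vertices of degree $< n-3$ are adjacent, so $V_{low}$ induces a clique; hence every non-neighbour of a vertex $v \in V_{low}$ must lie in $V_1$. Writing $E_{1l}$ for the number of non-adjacent pairs $(a,v)$ with $a \in V_1$ and $v \in V_{low}$, counting from the $V_{low}$ side then gives $E_{1l} \ge 3\,|V_{low}|$, since each $v \in V_{low}$ has at least three non-neighbours and all of them belong to $V_1$.

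The crucial refinement, which is where the sharp bound $3n/4$ comes from (a naive two-sided count only yields $3n/5$), is to bound $E_{1l}$ from the $V_1$ side. Each $a \in V_1$ has degree $n-3$, hence exactly two non-neighbours. Applying Theorem \ref{teo: carac2} to the vertex $a$ itself produces a vertex non-adjacent to $a$ of degree $n-3$; this vertex is necessarily one of the two non-neighbours of $a$ and lies in $V_1$. Therefore every $a \in V_1$ has at least one non-neighbour inside $V_1$, so at most one of its two non-neighbours lies in $V_{low}$. Counting $E_{1l}$ from the $V_1$ side thus yields $E_{1l} \le |V_1| = n - |V_{low}|$.

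Combining the two estimates gives $3\,|V_{low}| \le n - |V_{low}|$, that is $|V_{low}| \le n/4$, which is exactly $|V_1| \ge 3n/4$. The only delicate point is the third step: one must apply Theorem \ref{teo: carac2} to a vertex already lying in $V_1$ and observe that the guaranteed non-adjacent degree-$(n-3)$ vertex must be one of its two missing neighbours, so that the $V_1$-to-$V_{low}$ non-adjacency contribution per vertex drops from $2$ to $1$. This halving of the $V_1$-side count is precisely what upgrades the bound from $3n/5$ to $3n/4$.
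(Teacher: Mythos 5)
Your proof is correct and follows essentially the same route as the paper's: both use Roman saturation (Proposition \ref{Prop: R-complete}) to make the low-degree vertices a clique, so each has at least $3$ non-neighbours all lying in $V_1$, and both use the characterization of v-criticality (Theorem \ref{teo: carac2}, which the paper invokes via Corollary \ref{Cor: carac}) to show each degree-$(n-3)$ vertex has at most one non-neighbour of low degree, yielding $3(n-k)\leq k$. Your explicit double count of non-adjacent pairs is just a cleaner phrasing of the paper's ``all witnesses are distinct'' argument.
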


\begin{proof} Let $V(G)=\{v_1,v_2,\dots ,v_n\}$ and assume  $V_1=\{v_1,...,v_k\}$.  By  Proposition \ref{Prop: R-complete}, and Remark \ref{R(G)3} we know that for all  
$ v_r,v_s$ with $r,s>k$ then $[v_r,v_s]\in E(G)$. Therefore, since $deg(v_j)<n-3$ for every $j>k$, there are at least three vertices $v_{j_1},v_{j_2},v_{j_3}$ with $j_1,j_2,j_3\leq k$ such that $[v_{j_1},v_j],[v_{j_2},v_j],[v_{j_3},v_j]\not \in E(G)$. 

Also, by Corolary \ref{Cor: carac}, every $v_i$ with $i\leq k$ is joined at most to $k-2$ vertices in $V_1$. Hence, there is at most one vertex $v_j$ with $j>k$ such that $[v_{i},v_j]\not \in E(G)$. Therefore, given $v_j, v_{j_1},v_{j_2},v_{j_3}$ with the condition above, $v_{j_i}\neq v_{j'_{i'}}$ for any $j\neq j'$ or $i\neq i'$. Thus, $k\geq 3(n-k)$ and $k\geq \frac{3n}{4}$.
\end{proof}

\begin{nota} Notice that the graphs in the family described in Example \ref{Ejp: cicle} are Roman saturated since every vertex has degree $n-3$. Then, there is an infinite family of different nonelementary v-critical Roman saturated
graphs with $\gamma_R(G) = 4$. 
\end{nota}

Recall that $G$ is Roman critical on the edges or $e$-critical if for every edge $e$ the graph $G\setminus \{ e\}$ is not $v$-critical. 

\begin{prop}\label{Prop: e-crit} A nonelementary v-critical graph $G=(V(G),E(G))$ with $\gamma_R(G)=4$ is $e$-critical if and only if for every edge $e\in E(G)$ 
there is a vertex $v_e\in V(G)$ such that every vertex with degree $n-3$ nonadjacent to $v_e$ is a vertex contained in $e$.
\end{prop}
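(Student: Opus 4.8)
The plan is to deduce this statement from the general edge-criticality characterization in Proposition \ref{Prop: e-crit2} by making its abstract condition completely explicit when $\gamma_R(G)=4$. The first step is to classify the minimal Roman partitions of $G$. Since $G$ is nonelementary, $n\geq 5$, so a weight-$4$ partition cannot have $|V_1|=4$ (the vertices of $V_0$ would have no neighbour in $V_2$); hence every minimal partition is either of \emph{type A}, with $|V_2|=2$ and $V_1=\emptyset$, or of \emph{type B}, with $|V_2|=1$ and $|V_1|=2$. The partitions relevant to Proposition \ref{Prop: e-crit2} are those containing a prescribed vertex $v_e$ in $V_1$, and since type-A partitions have $V_1=\emptyset$, only type-B partitions will matter.

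Next I would pin down the structure of a minimal type-B partition $(V_0;\{p,q\};\{a\})$. Being Roman forces $V_0\subseteq N(a)$, so $\deg(a)\geq n-3$; and if $a$ were adjacent to $p$ (or to $q$) we could relabel that vertex $0$ and obtain a Roman function of weight $3$, contradicting $\gamma_R(G)=4$. Thus in any minimal type-B partition the vertex of $V_2$ satisfies $\deg(a)=n-3$ and $N[a]=V(G)\setminus\{p,q\}$, i.e.\ $a$ is nonadjacent to exactly the two vertices of $V_1$. Conversely, given any vertex $a$ with $\deg(a)=n-3$ nonadjacent to a fixed vertex $v_e$, write $N[a]=V(G)\setminus\{v_e,t\}$; then $(V(G)\setminus\{a,v_e,t\};\{v_e,t\};\{a\})$ is a Roman partition of weight $4$, hence minimal. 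This sets up, for fixed $v_e$, an exact correspondence between minimal partitions with $v_e\in V_1$ and vertices $a$ of degree $n-3$ nonadjacent to $v_e$, the second label-$1$ vertex being determined by $a$.

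With this correspondence in hand I would translate the local condition of Proposition \ref{Prop: e-crit2}. For a type-B partition with $V_2=\{a\}$, the requirement that $e=[v,w]$ with $v\in V_0$ and $N[v]\cap V_2=\{w\}$ says $w=a$ and $v\in V_0$ is adjacent to $a$; since $N(a)=V_0$, this is equivalent to simply saying that $e$ is incident to $a$ (the far endpoint then lies automatically in $V_0$). Hence, for a given edge $e$ and witness $v_e$, the condition of Proposition \ref{Prop: e-crit2} holds for \emph{every} minimal partition with $v_e\in V_1$ if and only if $e$ is incident to every vertex $a$ of degree $n-3$ that is nonadjacent to $v_e$, that is, if and only if every such $a$ is an endpoint of $e$. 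Quantifying over all edges and invoking Proposition \ref{Prop: e-crit2} yields exactly the stated equivalence.

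I expect the only delicate point to be the faithful translation of the clause $N[v]\cap V_2=\{w\}$, and in particular checking both halves of the correspondence: one must verify that every minimal partition with $v_e\in V_1$ is forced to be of type B with $V_2$ a single degree-$(n-3)$ vertex, and that conversely each such vertex genuinely arises from a minimal partition, so that no spurious or missing partitions distort the quantifier ``for any minimal Roman partition''. Once the correspondence is confirmed to be a genuine bijection, the remainder is a direct rewriting. Theorem \ref{teo: carac2} guarantees that for each $v_e$ at least one such vertex $a$ exists, so the family of partitions under consideration is nonempty and the quantifiers behave as expected.
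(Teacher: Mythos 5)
Your proof is correct, but it takes a genuinely different route from the paper's. You obtain the result as a specialization of Proposition \ref{Prop: e-crit2}: you classify the minimal weight-$4$ partitions into type A ($|V_2|=2$, $V_1=\emptyset$) and type B ($|V_2|=1$, $|V_1|=2$), show that the type-B partitions containing a fixed $v_e$ in $V_1$ are in bijection with the vertices $a$ of degree $n-3$ nonadjacent to $v_e$ (with $V_2=\{a\}$ and $V_1$ forced to be the two non-neighbours of $a$), and then check that for such a partition the clause ``$e=[v,w]$ with $v\in V_0$ and $N[v]\cap V_2=\{w\}$'' collapses to ``$e$ is incident to $a$''. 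The paper never invokes Proposition \ref{Prop: e-crit2} here; it argues directly from the definition of e-criticality: for the forward implication it chooses $v_e$ with $\gamma_R(G\setminus \{e\}\setminus \{v_e\})=4$, builds for each degree-$(n-3)$ vertex $w$ nonadjacent to $v_e$ the explicit weight-$3$ function ($w\mapsto 2$, $y\mapsto 1$, rest $0$) on $G\setminus\{v_e\}$, and notes that it must fail to be Roman once $e$ is deleted, forcing $e$ to be incident to $w$; for the converse it shows that any putative weight-$3$ Roman function on $G\setminus\{e,v_e\}$ would have weight at least $4$, a contradiction. The combinatorial core is the same in both arguments --- weight-$3$ Roman functions on $G\setminus\{v_e\}$ correspond exactly to degree-$(n-3)$ vertices nonadjacent to $v_e$ --- but your reduction buys modularity (all the edge-deletion bookkeeping is delegated to Proposition \ref{Prop: e-crit2}) and makes the quantifier structure transparent, whereas the paper's argument is self-contained. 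One shared implicit convention worth noting: your bijection only produces vertices $a\neq v_e$, and the paper's proof likewise only treats $w\neq v_e$, so in the statement the phrase ``every vertex with degree $n-3$ nonadjacent to $v_e$'' must be read as excluding $v_e$ itself.
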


\begin{proof} 
%
%
%
Suppose $G$ is e-critical. Then, for every edge $e$ there is some $v_e$ such that $\gamma_R(G\setminus \{e\} \setminus  \{v_e\})=4$. Let $w$ be a vertex with $deg_G(w)=n-3$ and 
$[v_e,w]\not \in E(G)$. Then, $N[w]=G\setminus \{v_e,y\}$ for some vertex $y$. Let $r_w\colon V(G)\backslash \{v\}\to \{0,1,2\}$ such that $r_w(w)=2$, $r_w(y)=1$ and $r_w(x)=0$ for every $x\in V(G)\backslash \{v_e,w,y\}$. Since $\gamma_R(G\setminus \{e\} \setminus  \{v\})=4$, it follows that $r_w$ is not Roman on $G\setminus \{e\}$, this is, there is a vertex labelled with 0 which is not adjacent to $w$ on $G\setminus \{e\}$. Thus, for some $x\in V(G)\setminus \{v_e,y,w\}$ we obtain $e=[w,x]$.

Now suppose that for an edge $e\in E(G)$ there is a vertex $v_e\in V(G) $ such that every vertex $w$ with degree $n-3$ nonadjacent to $v_e$ is a vertex contained in $e$. 
Let us see that $G\setminus \{e\}$ is not v-critical. Let $r$ be any Roman function on $G\backslash \{e,v_e\}$ such that $\sum_{v\in V(G)\backslash \{v_e\}}r(v)=3$. Then, $r(w)=2$ for some vertex $w$ nonadjacent to $v_e$ with $deg_G(w)=n-3$. Let $N[w]=G\backslash \{v_e,y\}$ in $G$. It is also necessary that $r(y)=1$ since $y$ is not adjacent to $w$. Finally, by hypothesis, $e=[w,w']$ for some vertex $w'$ (obviously different from $v_e,y$). But then, since $r$ is a Roman function on $G\backslash \{e,v_e\}$, then $r(w')=1$ and 
$\sum_{v\in V(G)\backslash \{v_e\}}r(v)\geq 4$ leading to contradiction.
\end{proof}

\begin{nota} There is an infinite family of different v-critical and e-critical graphs. Consider the familly $\{X_n \ | \ n\geq 5\}$ described in Example \ref{Ejp: cicle}. For each $n$, consider $X_n=X_n^0,X_n^1,..., X_n^{k_n}$ a sequence of graphs where $X_n^i$ is obtained from $X_n^{i-1}$ by removing one edge so that $X_n^i$ is again v-critical until $X_n^{k_n}$ is e-critical (since $X_n$ is nonelementary, by Lemma \ref{Prop: e-crit-n}, there is always such a $k_n$). Since for each $n$ $|V(X_n)|=n$,  the graphs in $\{X_n^{k_n}\ | \ n\geq 5\}$ are all different from each other.
\end{nota}

\begin{prop}\label{Prop: cut} Let $G=(V(G),E(G))$ be a nonelementary v-critical, e-critical and Roman saturated (simple) graph with 
$\gamma_R(G)=4$. Let $V_1:=\{v\in V(G) \ | \ deg(v)=n-3\}=\{v_1,...,v_k\}$ and 
$V_2:=\{v\in V(G) \ | \ deg(v)<n-3\}=\{v_{k+1},...,v_n\}$. Then, either $G\cong C_5$ or $|V_2|=n-k=1$ and there is a cut vertex.
\end{prop}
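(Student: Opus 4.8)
The plan is to combine the three structural facts we already have. From the saturation lemma we know $|V_1| = k \geq \tfrac{3n}{4}$, so $|V_2| = n-k \leq \tfrac{n}{4}$. The strategy is to show that $|V_2| \geq 2$ forces a contradiction with $e$-criticality, leaving only the cases $|V_2| = 0$ and $|V_2| = 1$; the former will collapse to $C_5$, and the latter will produce the cut vertex.

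First I would examine the case $|V_2| = 0$, i.e. every vertex has degree exactly $n-3$. Then $G$ is a $(n-3)$-regular graph in which, by Corollary \ref{Cor: carac}, each vertex $v_i$ misses exactly two others; this gives a $2$-regular \emph{complement}, so the complement $\overline{G}$ is a disjoint union of cycles. The $v$-criticality condition from Lemma \ref{lema: carac} (for every $x$ there is $a_x \neq x$ with $N[a_x] = V(G)\setminus\{x,b_x\}$) should pin down that $\overline{G}$ must be a single $5$-cycle: any longer or shorter cycle structure either fails to give the required pair $a_x, b_x$ for every $x$, or else contradicts $e$-criticality (for larger $n$ the graphs $X_n$ are $v$-critical but removing a suitable extra edge keeps them $v$-critical, so they are not $e$-critical). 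Thus $|V_2|=0$ yields exactly $G \cong C_5$.

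Next, for $|V_2| \geq 1$, I would use the saturation structure: by Proposition \ref{Prop: R-complete} all vertices of $V_2$ are pairwise adjacent, and each $v_j \in V_2$ is nonadjacent to at least three vertices of $V_1$ (from the proof of the $\tfrac{3n}{4}$ lemma). I would then invoke $e$-criticality through Proposition \ref{Prop: e-crit}: for every edge $e$ there is a witness vertex $v_e$ such that every degree-$(n-3)$ vertex nonadjacent to $v_e$ lies on $e$. Choosing $e$ cleverly (for instance an edge incident to a vertex of $V_2$, or an edge between two vertices of $V_1$) I would derive that there cannot be two distinct vertices in $V_2$: two elements of $V_2$ would force, for any candidate witness $v_e$, at least two independent degree-$(n-3)$ vertices off $v_e$ that cannot both sit on a single edge, violating Proposition \ref{Prop: e-crit}. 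This rules out $|V_2| \geq 2$ and leaves $|V_2| = 1$.

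Finally, with $|V_2| = n-k = 1$, say $V_2 = \{v_n\}$, I would show $v_n$ is a cut vertex. Since $\deg(v_n) < n-3$, the vertex $v_n$ is nonadjacent to at least three vertices of $V_1$; by Corollary \ref{Cor: carac} each such $v_i$ misses exactly the pair $\{x_i, a_{x_i}\}$, and the combinatorics of which vertices can miss which (each $v_i \in V_1$ is joined to at most $k-2$ others in $V_1$) forces the non-neighbors of $v_n$ to be isolated from the bulk once $v_n$ is deleted, producing a component of a single vertex and hence a cut vertex; this matches the earlier lemma that a cut vertex in such a graph must leave a singleton component. The main obstacle I anticipate is the $|V_2| \geq 2$ elimination: it requires carefully selecting the edge $e$ and the witness $v_e$ so that the two (or more) low-degree vertices genuinely obstruct the existence of a single edge containing all the required degree-$(n-3)$ non-neighbors, and making that counting argument airtight is the delicate part of the proof.
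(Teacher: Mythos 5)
Your case decomposition ($|V_2|\geq 2$ impossible; $|V_2|=1$ gives a cut vertex; $|V_2|=0$ gives $C_5$) is exactly the paper's, but in all three cases the decisive argument is absent, and in two of them the mechanism you gesture at is not the one that works. For $|V_2|\geq 2$ the right move is to apply Proposition \ref{Prop: e-crit} to the edge $e=[v_r,v_s]$ joining two vertices of $V_2$ (this edge exists by Proposition \ref{Prop: R-complete}). Since neither endpoint of $e$ has degree $n-3$, the witness $v_e$ can have \emph{no} degree-$(n-3)$ non-neighbor at all, i.e.\ $v_e$ is adjacent to every vertex of $V_1\setminus\{v_e\}$; this directly contradicts Theorem \ref{teo: carac2}, which guarantees $v_e$ a nonadjacent vertex $a_{v_e}$ of degree $n-3$. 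Your proposed contradiction (``two independent degree-$(n-3)$ vertices off $v_e$ that cannot both sit on a single edge'') is not what happens: the clash is with v-criticality, not with any packing count, and you yourself concede this step is the one you cannot yet make airtight.

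For $|V_2|=1$ the substantive claim is that $deg(v_n)=1$; the cut vertex is then the unique neighbour $v_{n-1}$, whose removal isolates $v_n$. Your sketch has this picture reversed (you speak of non-neighbors of $v_n$ becoming isolated ``once $v_n$ is deleted'') and supplies no argument. The paper's proof assumes $deg(v_n)\geq 2$ with edges $[v_{n-1},v_n],[v_{n-2},v_n]$, applies Proposition \ref{Prop: e-crit} to each to obtain $v_1,v_2$ with $N[v_1]=V(G)\setminus\{v_{n-1},v_n\}$ and $N[v_2]=V(G)\setminus\{v_{n-2},v_n\}$, then applies it a third time to $[v_1,v_2]\in E(G)$; every resulting case produces a vertex of degree at least $n-2$, which forces $\gamma_R(G)=3$ by Remark \ref{R(G)3}, a contradiction. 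For $|V_2|=0$, ``should pin down'' is not an argument: that $X_n$ fails to be e-critical for $n\geq 6$ is precisely what needs proving, and your complement-of-cycles reduction also silently omits disconnected complements (for $n=6$ the complement can be two disjoint triangles, i.e.\ $G\cong K_{3,3}$). The paper instead applies Proposition \ref{Prop: e-crit} to each of the $n-3$ edges $[v_1,w_i]$ at a fixed vertex $v_1$: since every vertex has exactly two non-neighbors, the witness for $[v_1,w_i]$ has non-neighbors exactly $\{v_1,w_i\}$; these $n-3$ witnesses are pairwise distinct and all nonadjacent to $v_1$, which has only two non-neighbors, so $n-3\leq 2$, hence $n=5$ by nonelementarity and $G\cong C_5$. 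In short, the skeleton matches the paper, but each load-bearing step is missing or misdirected.
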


\begin{proof} Claim: $|V_2|\leq 1$. Otherwise, suppose  there exist at least two vertices $v_r,v_s\in V_2$. By Proposition \ref{Prop: R-complete}, for all $v_r,v_s\in V_2$, $e=[v_r,v_s]\in E(G)$. By Proposition \ref{Prop: e-crit}, there is a vertex $v_e\in V(G)$ such that for every $v\in V_1$, either $v=v_r$, $v=v_s$ or $v$ is adjacent to $v_e$. In this case, since $v_r,v_s\in V_2$, every $v\in V_1\backslash \{v_e\}$ is adjacent to $v_e$. However, by Theorem \ref{teo: carac2}, there exists some vertex $a_{v_e}\neq v_e$ in $V_1$ such that $a_{v_e}$ is not adjacent to $v_e$ leading to contradiction. Therefore, $|V_2|\leq 1$.

Claim: If $|V_2|= 1$, then $deg(v_n)=1$ and there is a cut vertex. 
Suppose $n-k=1$ and $deg(v_n)\geq 2$. There is no loss of generality if we assume that 
$[v_{n-1},v_n],[v_{n-2},v_n]\in E(G)$. Applying Proposition \ref{Prop: e-crit} for $e_1=[v_{n-1},v_n]$ and $e_2=[v_{n-2},v_n]$ we know that there exist two vertices, 
$v_{e_1},v_{e_2}\in \{v_1,...,v_{n-3}\}$ such that for every $v\in V_1$, if $v\neq v_{e_1},v_{n-1}$ then $[v,v_{e_1}]\in E(G)$ and if 
$v\neq v_{e_2},v_{n-2}$ then $[v,v_{e_2}]\in E(G)$. See Figure \ref{Fig_cut}. Let us assume that $v_{e_1}=v_1$ and $v_{e_2}=v_2$. Hence, $N[v_{1}]=V(E)\backslash \{v_{n-1},v_n \}$ and 
$N[v_{2}]=V(E)\backslash \{v_{n-2},v_n \}$. (Notice that this implies that $v_{1}\neq v_{n-2},v_n$ and $v_{2}\neq v_{n-1},v_n$.) In particular, $e_3=[v_{1},v_{2}]\in E(G)$. Let us apply again Proposition \ref{Prop: e-crit} for $e_3$. Then, there is some vertex $v_{e_3}$ such that for every  $v\neq v_{1},v_{2},v_{e_3},v_n$, then $[v,v_{e_3}]\in E(G)$. Let us distinguish the following three cases:

\begin{itemize}
\item If $v_{e_3}=v_n$, then $v_n$ is adjacent to every vertex $v\neq v_1,v_2$ and $deg(v_n)=n-3$ which is a contradiction.

\item Let $[v_{e_3},v_n]\in E(G)$. Now, by the election of $v_{1}$, $v_{2}$, either $[v_{e_3},v_{1}]\in E(G)$ or 
$[v_{e_3},v_{2}]\in E(G)$. Therefore, $deg(v_{e_3})\geq 1+1+n-4=n-2$ which implies that $\gamma_R(G)=3$ leading to contradiction.

\item Let $v_{e_3}\neq v_n$ and $[v_{e_3},v_n]\not \in E(G)$. Then $v_{e_3}\neq v_{n-1}, v_{n-2}$ and, therefore, 
$[v_{e_3},v_1]\in E(G)$ and  $[v_{e_3},v_2]\in E(G)$. Thus, $v_{e_3}$ is adjacent to every vertex $v\neq v_n,v_{e_3}$ and $deg(v_{e_3})=n-2$ which implies that $\gamma_R(G)=3$ leading to contradiction.
\end{itemize}

Thus, there is a unique edge incident to $v_n$, let's say $[v_{n-1},v_{n}]$, and $v_{n-1}$ is a cut vertex.

\begin{figure}[ht]
\centering
\includegraphics[scale=0.5]{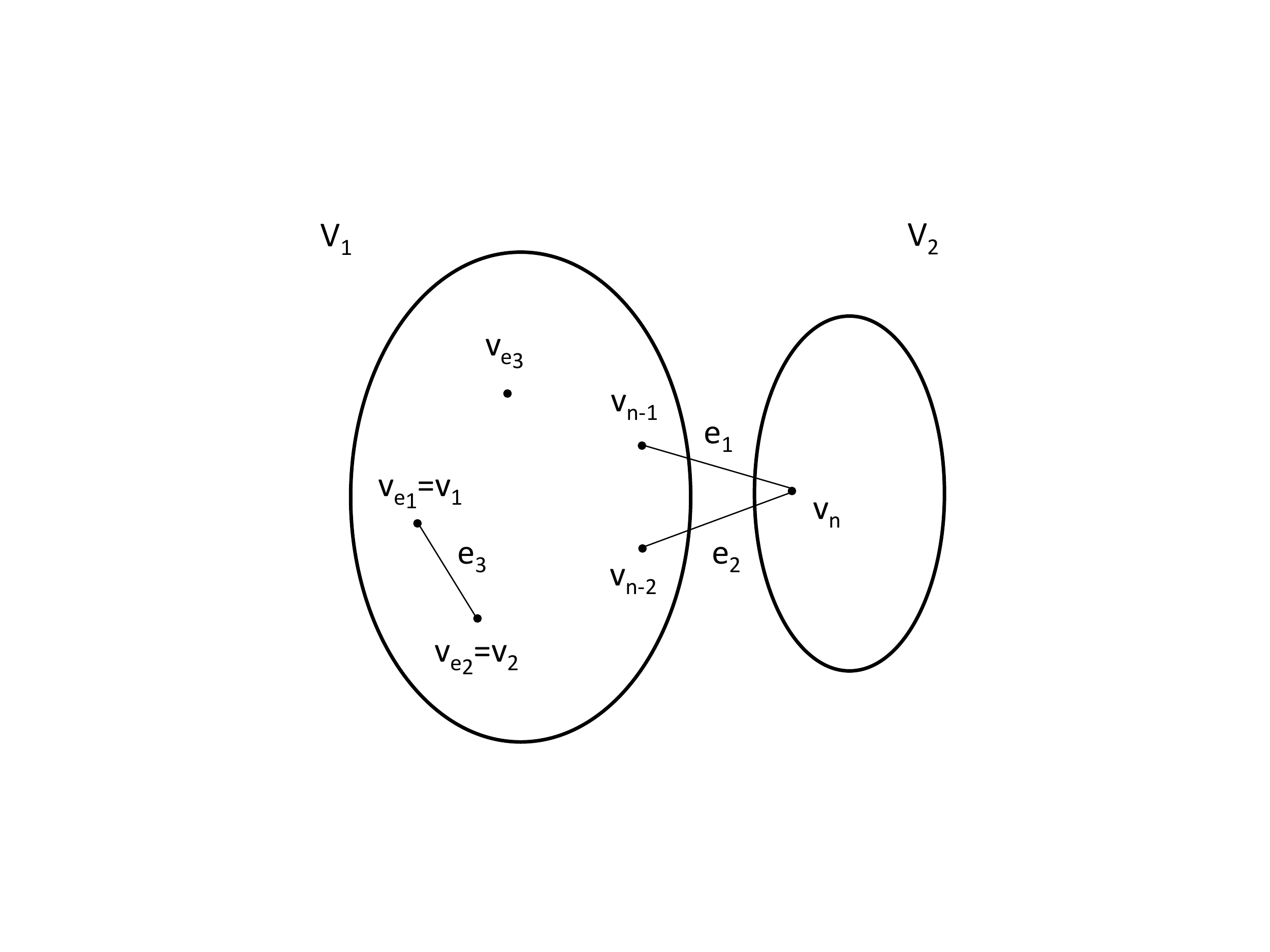}
\caption{If $|V_2|= 1$ and $deg(v_n)>1$, then there is a contradiction: $v_{e_3}\neq v_n$ and $deg(v_{e_3})=n-2$.}
\label{Fig_cut}
\end{figure}

Claim: If $|V_2|=0$, then $G\cong C_5$. 

Suppose $k=n$, this is, $deg(v)=n-3$ for every $v\in V(G)$. Consider all the edges incident to $v_1$: $e_1,...,e_{n-3}$ and let $e_i=[v_1,w_{i}],  i=1,n-3$). Then, by Proposition \ref{Prop: e-crit}, there are 
$n-3$ different vertices $v_{2},...,v_{n-2}$ such that $N[v_{i+1}]=G\backslash \{ v_1,w_i\}$ for $i=1,n-3$. 
In particular, $[v_1,v_{i+1}]\not \in E(G)$ for $i=1,n-3$. But since $deg(v_1)=n-3$, it follows that $n-3\leq 2$ and $n\leq 5$. 
Since we assumed that $G$ is nonelementary then $n=5$ and $deg(v_i)=2$ for every $1\leq i \leq n$. 

Let us assume with no loss of generality that 
$[v_1,v_2],[v_2,v_3]\in E(G)$. Then, if $[v_1,v_3]\in E(G)$ these vertices have already degree 2. Hence, $deg(v_4), deg(v_5)<2$ leading to contradiction. So, we may assume, again without loss of generality, that $[v_3,v_4]\in E(G)$.  Again, 
if $[v_1,v_4]\in E(G)$, then $deg(v_5)<2$. Therefore, $G\cong C_5$.
\end{proof}

\begin{cor}\label{Prop: C5} $C_5$ is the unique nonelementary (simple) graph, $G$, with $\gamma_R(G)=4$ which is v-critical, e-critical and Roman saturated with no cut vertices.
\end{cor}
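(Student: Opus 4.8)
The plan is to obtain this corollary as an essentially immediate consequence of Proposition~\ref{Prop: cut}, which carries all the structural weight, and then to verify separately that $C_5$ really does meet every condition in the statement. Proposition~\ref{Prop: cut} tells us that any nonelementary, v-critical, e-critical and Roman saturated simple graph $G$ with $\gamma_R(G)=4$ falls into exactly one of two cases: either $G\cong C_5$, or $|V_2|=1$ and $G$ has a cut vertex. The present hypothesis forbids cut vertices, so I would simply rule out the second case and conclude $G\cong C_5$. This gives the uniqueness half of the statement: no graph other than $C_5$ can satisfy all five properties at once.

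For the existence half I would check that $C_5$ itself satisfies each requirement. That $\gamma_R(C_5)=4$ and that $C_5$ is v-critical are already recorded in the paragraph on cycles (the case $n=3k+2$ with $k=1$); since $|V(C_5)|=5>4$, it is nonelementary, and since a cycle is $2$-connected it has no cut vertices. For the Roman saturated property I would invoke Proposition~\ref{Prop: R-complete}: the criterion only constrains pairs of vertices of degree strictly less than $n-3=2$, but in $C_5$ every vertex has degree exactly $2=n-3$, so there are no such pairs and the condition holds vacuously.

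The single remaining point is e-criticality, and this is the only step requiring an actual computation. I would argue that deleting any edge of $C_5$ produces the path $P_5$, and that $P_5$ is not v-critical: a direct count gives $\gamma_R(P_5)=4$, whereas removing the central vertex of $P_5$ leaves two disjoint edges with Roman domination number $4\neq 3$, so $P_5$ fails the v-criticality condition. Since all edges of $C_5$ are equivalent under its automorphism group, this one deletion settles every edge, and $C_5$ is e-critical. Alternatively one could verify the same fact through Proposition~\ref{Prop: e-crit}.

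The main (and only) obstacle I anticipate is this last finite check, and even it reduces to a short symmetry argument together with the observation that deleting the middle vertex of $P_5$ does not lower its Roman domination number. Everything else is a direct appeal to Proposition~\ref{Prop: cut} and to the vacuous form of the Roman saturated criterion.
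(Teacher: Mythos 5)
Your proof is correct and follows essentially the same route as the paper: the uniqueness half is exactly the paper's (implicit) argument, namely that Proposition~\ref{Prop: cut} leaves only the alternative $G\cong C_5$ once the cut-vertex case is excluded by hypothesis. Your additional verification that $C_5$ actually satisfies all the required properties --- in particular the e-criticality check via $\gamma_R(P_5)=4$ together with the failure at the central vertex of $P_5$ --- is sound, and it fills in a detail (existence) that the paper leaves scattered and implicit.
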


\noindent For any even number $n\geq 6$, let us denote by $D_n=( V(D_n),E(D_n)$ a graph such that $ V(D_n)=\{v_1,...,v_n\}$ and 
$E(D_n)=\{[v_1,v_{2}]\}\cup\{[v_{n-1},v_{n}]\}\cup \{[v_j,v_{n-1}]:3\leq j \leq n-2\}\cup \{[v_r,v_j]: r=1,2  \mbox{ and } 3\leq j \leq n-2\}$$\cup \{[v_i,v_j]:3\leq i,j \leq n-2\}\backslash \{[v_{2j-1},v_{2j}]: 2\leq j \leq \frac{n-2}{2}\}$. 
See Figure \ref{Disconnecting}.

\begin{figure}[ht]
\centering
\includegraphics[scale=0.4]{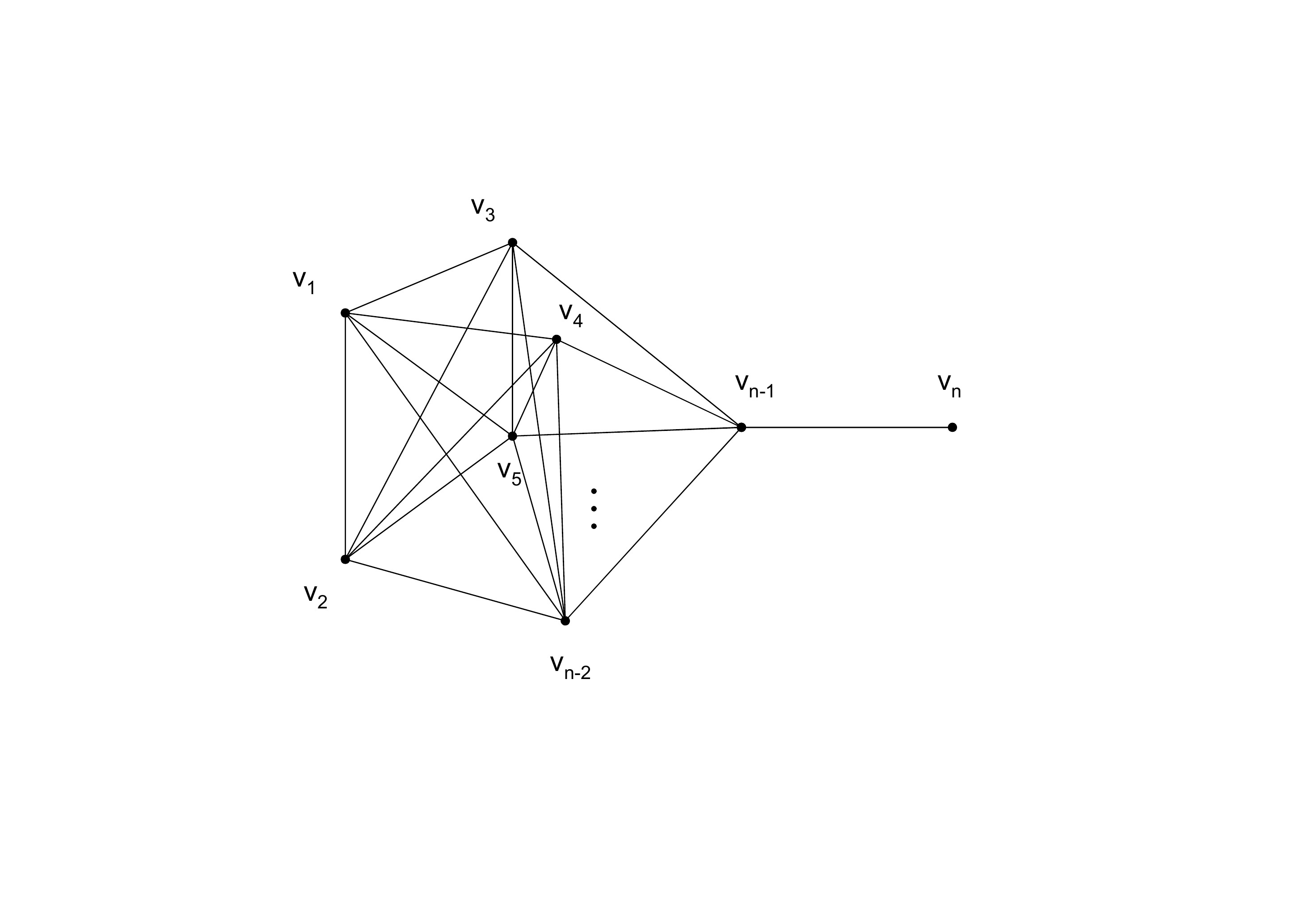}
\caption{If $n>5$ with $n$ even, then there is a unique v-critical, e-critical and Roman complete graph, $D_n$, with $\gamma_R(X)=4$.}
\label{Disconnecting}
\end{figure}

Notice that for every $1\leq i \leq n-1$, $deg(v_i)=n-3$ and $deg(v_n)=1$. In fact, if $r=1,2$, 
$N(v_r)=V(E)\backslash \{v_{n-1},v_n\}$, if $3\leq j \leq n-2$ with $j$ odd, then 
$N(v_j)=V(E)\backslash \{v_{j+1},v_n\}$, if $3\leq j \leq n-2$ with $j$ even, then 
$N(v_j)=V(E)\backslash \{v_{j-1},v_n\}$, $N(v_{n-1})=V(E)\backslash \{v_{1},v_2\}$ and $N(v_{n})=\{v_{n-1},v_n\}$.

\begin{prop} For every even number $n\geq 6$, $D_n$ is a nonelementary v-critical, e-critical and Roman saturated (simple) 
graph with $\gamma_R(G)=4$.
\end{prop}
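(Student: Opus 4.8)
The plan is to establish the four asserted properties separately, invoking the characterizations proved earlier in this section; the enabling first step is to determine $\gamma_R(D_n)$, since once we know $\gamma_R(D_n)=4$ the graph is automatically nonelementary (as $4<n$) and Theorem \ref{teo: carac2}, Proposition \ref{Prop: R-complete} and Proposition \ref{Prop: e-crit} all become available.

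First I would compute $\gamma_R(D_n)=4$. For the upper bound, the function assigning value $2$ to $v_1$ and to $v_{n-1}$ and value $0$ elsewhere is Roman: $v_1$ dominates $V(D_n)\setminus\{v_{n-1},v_n\}$ while $v_{n-1}$ dominates $v_n$ and the remaining vertices, so $\gamma_R(D_n)\le 4$. For the lower bound, the listed neighborhoods show that every vertex has degree $n-3$ except $v_n$, whose degree is $1$; hence no vertex has degree at least $n-2$, and Remark \ref{R(G)3} yields $\gamma_R(D_n)>3$. Therefore $\gamma_R(D_n)=4$.

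The v-critical and Roman saturated properties are then read off the neighborhood list. For v-criticality I would apply Theorem \ref{teo: carac2} and produce, for each vertex $x$, a non-adjacent vertex of degree $n-3$: take $v_{n-1}$ when $x\in\{v_1,v_2\}$, take the matching partner of $v_j$ when $x=v_j$ is a middle vertex ($3\le j\le n-2$), and take $v_1$ when $x\in\{v_{n-1},v_n\}$. For Roman saturation I would apply Proposition \ref{Prop: R-complete}: the only vertex of degree strictly less than $n-3$ is $v_n$, so there is no pair of such vertices and the required condition holds vacuously.

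The main work, and the step I expect to be the genuine obstacle, is e-criticality through Proposition \ref{Prop: e-crit}, which demands that for every edge $e$ there be a vertex $v_e$ whose degree-$(n-3)$ non-neighbors all lie in $e$. The structural fact driving the argument is that the middle vertices $v_3,\dots,v_{n-2}$ induce a complete graph minus the perfect matching $\{[v_{2j-1},v_{2j}]\}$, so each middle vertex $v_j$ has a \emph{unique} non-neighbor of degree $n-3$, namely its matching partner; meanwhile $v_{n-1}$ has exactly $\{v_1,v_2\}$ as its degree-$(n-3)$ non-neighbors, and each of $v_1$ and $v_2$ has exactly $\{v_{n-1}\}$. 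I would then argue uniformly: inspecting the neighborhoods shows that the only two edges with no endpoint among $v_3,\dots,v_{n-2}$ are $[v_1,v_2]$ and $[v_{n-1},v_n]$, so every other edge $e$ has a middle endpoint $v_j$, and choosing $v_e$ to be the matching partner of $v_j$ makes the single degree-$(n-3)$ non-neighbor of $v_e$ equal to $v_j\in e$. The two exceptional edges are then handled directly: for $e=[v_1,v_2]$ take $v_e=v_{n-1}$, whose degree-$(n-3)$ non-neighbors are precisely $v_1,v_2$; for $e=[v_{n-1},v_n]$ take $v_e=v_1$, whose only degree-$(n-3)$ non-neighbor is $v_{n-1}$. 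The care required lies entirely in this bookkeeping of non-neighbors; once one verifies that each middle vertex is adjacent to $v_1$, $v_2$, $v_{n-1}$ and to every middle vertex except its partner, all cases close and $D_n$ is e-critical.
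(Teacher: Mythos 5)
Your proof is correct and takes essentially the same approach as the paper's: v-criticality via Theorem \ref{teo: carac2}, saturation via Proposition \ref{Prop: R-complete} (holding vacuously since $v_n$ is the only vertex of degree below $n-3$), and e-criticality via Proposition \ref{Prop: e-crit} with exactly the same choices of $v_e$ (the matching partner for edges meeting $\{v_3,\dots,v_{n-2}\}$, $v_{n-1}$ for $[v_1,v_2]$, and $v_1$ for $[v_{n-1},v_n]$). Your explicit verification that $\gamma_R(D_n)=4$ is a detail the paper leaves implicit, but it does not change the route of the argument.
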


\begin{proof}  Since $deg(v_i)=n-3$ for every $i\neq n$ it follows immediately that:

$G$ is v-critical by Theorem \ref{teo: carac2}; 

$G$ is Roman saturated.

To check that $G$ is e-critical, by Proposition \ref{Prop: e-crit}, it suffices to find for every edge $e=[x,y]$ a vertex $v_e$ which is adjacent to every vertex in  $\{v_1,...,v_{n-1}\}\backslash \{ x, y \}$. 

If $e=[v_{n-1},v_{n}]$, then $v_e=v_1$. 

If $e=[v_1,v_2]$, then $v_e=v_{n-1}$. 

For the rest of the edges, note that if  $v_{2j-1}\in e$, then $v_{2j}\notin e$ and $v_{2j}$ is adjacent to every vertex in $\{v_1,...,v_{n-1}\}\backslash\{v_{2j-1}\}$ and if $v_{2j}\in e$, then  $v_{2j-1}\notin e$ and $v_{2j-1}$ is adjacent to every vertex in $\{v_1,...,v_{n-1}\}\backslash\{v_{2j}\}$ 
for every $2\leq j\leq \frac{n-2}{2}$.  
\end{proof}

\begin{teorema}\label{Tma} If $G=(V(G),E(G))$ is a nonelementary v-critical, e-critical and Roman saturated (simple) graph with $\gamma_R(G)=4$, then:
\begin{itemize}
\item[a)] If $|V(G)|= 5$, then $G \cong C_5$

\item[b)] If $|V(G)|=n> 5$, then $n$ is even and, up to relabelling the vertices, $G \cong D_n$.
\end{itemize}
\end{teorema}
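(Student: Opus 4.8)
The plan is to read both statements off the dichotomy already established in Proposition \ref{Prop: cut}: a nonelementary, v-critical, e-critical and Roman saturated graph with $\gamma_R(G)=4$ is either isomorphic to $C_5$ or has exactly one vertex of degree $<n-3$, which is then a pendant vertex (degree $1$) attached at a cut vertex. So the entire argument reduces to analysing this second alternative and matching it against the explicit graph $D_n$. For part (a), set $n=5$. By Proposition \ref{Prop: cut} either $G\cong C_5$ and we are done, or $|V_2|=1$, in which case the unique vertex $v_5\in V_2$ has degree $1$ while the remaining four vertices all have degree $n-3=2$. The degree sequence would be $(2,2,2,2,1)$, whose sum is the odd number $9$; this contradicts the handshake lemma, so the second case cannot occur and $G\cong C_5$. (Equivalently, the parity obstruction shows $G$ has no cut vertex, and then Corollary \ref{Prop: C5} applies.)

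For part (b) assume $n>5$. Since $C_5$ has five vertices, Proposition \ref{Prop: cut} forces the second alternative: after relabelling, $V_1=\{v_1,\dots,v_{n-1}\}$ with $\deg(v_i)=n-3$ for all $i\leq n-1$, while $v_n$ is a pendant vertex adjacent only to the cut vertex $v_{n-1}$. The key device is the \emph{non-adjacency graph} $H$ on the vertex set $V_1$, whose edges are exactly the non-edges of $G$ lying inside $V_1$. For each $i\leq n-2$ the vertex $v_i$ is non-adjacent to $v_n$, so all of its $n-3$ neighbours lie in $V_1\setminus\{v_i\}$, a set of size $n-2$; hence $\deg_H(v_i)=1$. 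By contrast $v_{n-1}$ is adjacent to $v_n$, so only $n-4$ of its neighbours lie in $V_1$ and therefore $\deg_H(v_{n-1})=2$.

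Thus $H$ has $n-1$ vertices whose degree sequence is a single $2$ together with $n-2$ ones. Summing degrees gives $2+(n-2)=n$, which must be even: this is precisely where evenness of $n$ is forced. The shape of $H$ is then rigid. The degree-$2$ vertex $v_{n-1}$ is joined in $H$ to two distinct vertices $v_a,v_b$, and since $\deg_H(v_a)=\deg_H(v_b)=1$ these are their only $H$-edges; consequently $v_{n-1},v_a,v_b$ have exhausted all their $H$-incidences, so the remaining $n-4$ vertices of $V_1$ can only be joined among themselves, and being all of degree $1$ they form a perfect matching. Hence $H$ is a path of length two $v_a-v_{n-1}-v_b$ (with $v_{n-1}$ as midpoint) together with a perfect matching on $V_1\setminus\{v_{n-1},v_a,v_b\}$. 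This determines $G$ completely: its edges inside $V_1$ are the complement of $H$, and the only other edge is the pendant $[v_{n-1},v_n]$. Relabelling $v_a,v_b$ as $v_1,v_2$, the matched pairs as $(v_3,v_4),\dots,(v_{n-3},v_{n-2})$, and keeping $v_{n-1},v_n$, one checks this adjacency pattern coincides with $E(D_n)$, so $G\cong D_n$.

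I expect the only delicate step to be this final verification: confirming that the complement of the path-plus-matching graph $H$, with the pendant reattached, reproduces the edge set of $D_n$ edge for edge — in particular that $v_a\sim v_b$ in $G$, that each matched pair is non-adjacent, that $v_{n-1}$ is non-adjacent exactly to $v_a,v_b$, and that $v_n$ meets only $v_{n-1}$. Everything else is a short degree count plus the parity remark, and the phrase ``up to relabelling the vertices'' absorbs the freedom in the choice of perfect matching.
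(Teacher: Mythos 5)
Your proposal is correct and follows essentially the same route as the paper: invoke Proposition \ref{Prop: cut} to reduce to the dichotomy ``$G\cong C_5$ versus a unique pendant vertex attached at a cut vertex,'' and then reconstruct the second case edge by edge, the non-edges inside $V_1$ forming a two-edge path centered at $v_{n-1}$ plus a perfect matching, which is exactly the paper's pattern $\{[v_{2j-1},v_{2j}]\}$ yielding $D_n$. Your complement-graph packaging and explicit handshake-lemma arguments (ruling out the pendant case when $n=5$, and forcing $n$ even) are slightly tidier than the paper's write-up, which leaves both parity points implicit.
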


\begin{proof} In the case $|V_2|=0$ from the proof of Proposition \ref{Prop: C5}, $G=C_5$.

Now, let us see that the case $|V_2|= 1$ implies that $n>5$ is even and, up to relabelling the vertices, $G=D_n$. 

As we saw, there is a unique edge incident to $v_n$, $[v_n,v_{n-1}]$. Since $deg(v_{n-1})=n-3$ then there are two vertices, let's say $v_1,v_2$ such that $N[v_{n-1}]=G\backslash \{v_1,v_2\}$ and for every $3\leq j \leq n-2$, $[v_j,v_{n-1}]\in E(G)$. Since $deg(v_1)=deg(v_2)=n-3$, and $[v_r,v_k]\not \in E(G)$ for $r=1,2$ and $k=n,n-1$, it follows that $\{[v_r,v_j]:r=1,2 \mbox{ and } 3\leq j \leq n-2 \}\subset E(G)$. Finally, for every $v_i$, $deg(v_i)=n-3$ for every $3\leq i \leq k$. Therefore, for all $ 3\leq i\leq n-2$ there is exactly one $j\neq i$, $3\leq j \leq n-2$, such that $[v_i,v_j]\not \in E(G)$. In particular, we may assume, without loss of generality that these edges are $\{v_{2j-1},v_{2j}\}$ with $j=2,\frac{n-2}{2}$. Thus, we obtain the graph $D_n$.
\end{proof}

\begin{nota} There is an infinite family of v-critical, e-critical and Roman saturated (simple) graphs.
\end{nota}

In the following characterization we prove that for an arbitrary graph $G$ with $\gamma_R(G)=4$, the Roman criticality can be studied just by looking at small induced subgraphs. In fact, it suffices to check some properties on the induced subgraphs with 8 vertices to determine if the graph is $v$-critical, $e$-critical and Roman saturated. 

\begin{teorema}\label{Tma: 8} Let $G=(V(G),E(G))$ be a (simple) graph with $\gamma_R(G)=4$ and $|V(G)|=n\geq 8$. Then, $n$ is even and 
$G\cong D_n$ if and only if the following conditions hold: 

\begin{itemize}
\item[a)] There exist $\{ v_1,...,v_4\} \in V(G)$ such that $[v_1,v_i]\notin E(G)$ for every $2\leq i\leq 4$.

\item[b)] For every $\{ v_1,...,v_8\} \in V(G)$, if $[v_1,v_i]\notin E(G)$,  for all $ \, 2\leq i \leq 4$, then $|\{v_k:\ 1\leq k\leq 7 \mbox{ such that } [v_k,v_8]\in E(G) \}|\geq 5$.

\item[c)] For every $\{ v_1,...,v_6\} \in V(G)$, if $[v_1,v_i]\notin E(G)$ for every $2\leq i\leq 4$ then 
$|\{v_k: \ 5\leq k\leq 6 \mbox{ such that } [v_1,v_k]\in  E(G)  \}|\leq 1$.
\end{itemize}
\end{teorema}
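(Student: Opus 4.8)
The statement is an equivalence, and I would prove the two implications separately. The implication from ``$n$ even and $G\cong D_n$'' to (a), (b), (c) is a direct verification from the explicit adjacency list of $D_n$. In $D_n$ every vertex other than $v_n$ has degree $n-3$, hence exactly two non-neighbours, while $v_n$ has degree $1$ and is the \emph{only} vertex with at least three non-neighbours. Thus (a) is witnessed by $v_n$ together with any three of its non-neighbours; and in both (b) and (c) the hypothesis ``$[v_1,v_i]\notin E(G)$ for $2\le i\le 4$'' can be met only by $v_1=v_n$. Condition (c) then follows because $v_n$ has a single neighbour, and (b) follows because each admissible $v_8$ has only two non-neighbours and is therefore adjacent to at least five of any seven vertices. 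This direction is routine bookkeeping.

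The substantive direction is that (a), (b) and (c) together imply $G\cong D_n$. The plan is to decode each local condition into a degree statement, using $\gamma_R(G)=4$ and Remark~\ref{R(G)3} (which already forces every degree to be $\le n-3$) and exploiting $n\ge 8$ to realise the eight-vertex configuration. Read contrapositively, (c) says that a vertex with three non-neighbours can be adjacent to at most one further vertex, i.e.\ \emph{every vertex with at least three non-neighbours has degree $\le 1$}; equivalently each vertex has degree exactly $n-3$ or degree $\le1$. Condition (a) then supplies a vertex $v_\ast$ of degree $\le1$. Fixing $v_\ast$ in the slot $v_1$ and relabelling three of its $\ge n-2\ge 6$ non-neighbours as $v_2,v_3,v_4$, I would apply (b) with an arbitrary remaining vertex in the slot $v_8$: if that vertex had three or more non-neighbours, three of them could be placed in the free slots $v_5,v_6,v_7$ (here the counting is tight, and this is exactly what forces the hypothesis $n\ge 8$), contradicting (b). Hence \emph{every vertex other than $v_\ast$ has degree exactly $n-3$}, and $v_\ast$ is the unique low-degree vertex.

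Once the degree sequence is pinned down — one vertex of degree $\le1$ and $n-1$ vertices of degree $n-3$ — the graph is essentially determined, since each degree-$(n-3)$ vertex has exactly two non-neighbours. If $v_\ast$ has a neighbour $w_0$, the complement restricted to $W:=V(G)\setminus\{v_\ast\}$ is a path on three vertices $\{w_0,a,b\}$ together with a perfect matching on the remaining $n-4$ vertices; this forces $n$ to be even and reproduces the adjacency list of $D_n$ up to relabelling. Equivalently, from this degree/matching structure one checks that $G$ is nonelementary, v-critical (Theorem~\ref{teo: carac2}), Roman saturated (Proposition~\ref{Prop: R-complete}) and e-critical (Proposition~\ref{Prop: e-crit}), and then Theorem~\ref{Tma} together with $n\ge 8>5$ yields $n$ even and $G\cong D_n$.

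The step I expect to be the real obstacle is excluding the degenerate case in which the unique low-degree vertex $v_\ast$ is \emph{isolated} (degree $0$) rather than of degree $1$. Conditions (a), (b), (c) constrain only degrees and pairwise (non-)adjacencies, and an isolated $v_\ast$ attached to the complement of a perfect matching on $W$ — which exists precisely when $n-1$ is even, i.e.\ $n$ odd — still has $\gamma_R=4$ and appears to satisfy all three conditions, yet is not isomorphic to $D_n$. For even $n$ this configuration is impossible, because a $1$-regular graph on the $n-1$ vertices of $W$ requires $n-1$ to be even; so the clean route is to secure $n$ even (or to assume $G$ has no isolated vertex) first, after which $\deg(v_\ast)=1$ and the reconstruction above goes through. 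This is exactly the point at which the proof of Proposition~\ref{Prop: cut} tacitly assumes $\deg(v_\ast)\ge1$, so I would treat the parity/no-isolated-vertex argument as the crux and scrutinise it most carefully.
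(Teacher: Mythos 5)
Your reduction of (a), (b), (c) to degree statements is exactly the paper's strategy: the paper's proof asserts that (a) holds iff some vertex has degree $<n-3$, that (b) holds iff at most one vertex has degree $<n-3$, that (c) pins down the degree of that vertex, and then it reconstructs $D_n$ by citing the proof of Theorem~\ref{Tma}. The forward implication is routine in both treatments. So in outline you and the authors agree.

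However, the ``crux'' you flagged in your last paragraph is not a technicality to be patched in your write-up: it is a genuine counterexample to the theorem as stated, and it sits exactly where the paper's proof goes wrong. The paper claims that (c) holds if and only if every vertex of degree $<n-3$ has degree \emph{equal to} $1$; in fact a violation of (c) requires $v_1$ to have two distinct neighbours $v_5,v_6$, so (c) only forces degree \emph{at most} $1$, as you observed. Now take $n\geq 9$ odd and let $G$ be the disjoint union of an isolated vertex $v_\ast$ with the complement of a perfect matching on the remaining $n-1$ vertices (possible since $n-1$ is even). Then $\gamma_R(G)=3+1=4$, every vertex other than $v_\ast$ has degree $n-3$ with exactly two non-neighbours ($v_\ast$ and its matching partner), and (a), (b), (c) all hold: the only admissible $v_1$ in (b) and (c) is $v_\ast$, condition (c) is vacuous because $v_\ast$ has no neighbours, and any admissible $v_8$ misses at most two of $v_1,\dots,v_7$. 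Yet $n$ is odd and $G\not\cong D_n$. Consequently your hope of ``securing $n$ even first'' from (a)--(c) cannot be realized by any argument; the stated equivalence is simply false for odd $n$. (The same graph is nonelementary, v-critical, e-critical and Roman saturated, so it also contradicts Proposition~\ref{Prop: cut} and Theorem~\ref{Tma}, whose proofs make the identical tacit assumption $deg(v_n)\geq 1$ that you pinpointed.) The correct repair is the one you suggest parenthetically: add the hypothesis that $G$ has no isolated vertex (or that $G$ is connected). Under that hypothesis the unique low-degree vertex has degree exactly $1$, your complement computation (a path on three vertices plus a perfect matching on the remaining $n-4$ vertices) forces $n$ even, and the reconstruction of $D_n$ goes through. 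In short, your proposal is as correct as the statement permits, and your scrutiny of the degree-zero case exposes an error that the paper's own proof overlooks.
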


\begin{proof}  Property $a)$ holds for some subset $\{v_1,...,v_4\}\in V(G)$ if and only if there is at least one 
vertex $v$ such that $deg(v)<n-3$.

Property $b)$ holds for every subset $\{v_1,...,v_8\}\in V(G)$ if and only if there is at most one vertex $v$ such that $deg(v)<n-3$.

Property $c)$ holds for every subset $\{v_1,...,v_6\}\in V(G)$ if and only if for every vertex $v$ with $deg(v)<n-3$ then $deg(v)=1$.

Finally, as we saw in the proof of Theorem \ref{Tma}, if $n> 5$ and the unique vertex $v$ such that $deg(v)<n-3$ holds that $deg(v)=1$, then $G\cong D_n$.
\end{proof}

Thus, from theorems \ref{Tma} and \ref{Tma: 8} we obtain the following:

\begin{cor} Let $G=(V(G), E(G))$ be a (simple) graph with $\gamma_R(G)=4$ and $|V(G)|=n\geq 8$. Then, $G$ is a v-critical, e-critical and Roman saturated (simple) graph if and only if the following conditions hold: 

\begin{itemize}
\item[a)] There exist $(v_1,...,v_4)\in V(G)$ such that $[v_1,v_i]\notin E(G)$ for every $2\leq i\leq 4$.

\item[b)] For every $(v_1,...,v_8)\in V(G)$, if $[v_1,v_i]\notin E(G)$, for every $ 2\leq i \leq 4$, then $|\{v_k:\ 1\leq k\leq 7 \mbox{ such that } [v_k,v_8]\in E(G) \}|\geq 5$. 

\item[c)] For every $(v_1,...,v_6)\in V(G)$, if $[v_1,v_i]\notin E(G)$ for every $2\leq i\leq 4$ then 
$|\{v_k: \ 5\leq k\leq 6 \mbox{ such that } [v_1,v_k]\in  E(G)  \}|\leq 1$.
\end{itemize}
\end{cor}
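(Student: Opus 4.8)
The plan is to recognize that this corollary is simply the composition of Theorems~\ref{Tma} and~\ref{Tma: 8}, so the entire argument reduces to chaining two statements through a common middle term. The first observation I would record is that the hypothesis $n\geq 8$ does double duty: since $\gamma_R(G)=4<8\leq n=|V(G)|$, the graph $G$ is automatically nonelementary, so the nonelementariness required by Theorem~\ref{Tma} comes for free and need not be assumed separately; moreover $n\geq 8>5$, which is exactly the regime covered by part~(b) of Theorem~\ref{Tma}.

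For the forward implication I would argue as follows. Assuming $G$ is v-critical, e-critical and Roman saturated, it is nonelementary with $\gamma_R(G)=4$ and $n>5$, so Theorem~\ref{Tma}(b) forces $n$ to be even and $G\cong D_n$. Since the three conditions (a), (b), (c) stated in this corollary are verbatim the conditions (a), (b), (c) of Theorem~\ref{Tma: 8}, the implication ``$n$ even and $G\cong D_n\Rightarrow$ (a), (b), (c)'' contained in Theorem~\ref{Tma: 8} immediately yields the three conditions. For the reverse implication I would run the same chain backwards: assuming (a), (b), (c), the other half of Theorem~\ref{Tma: 8} gives that $n$ is even and $G\cong D_n$, and then the proposition establishing that $D_n$ is a nonelementary v-critical, e-critical and Roman saturated graph with $\gamma_R=4$ for every even $n\geq 6$ (in particular for our even $n\geq 8$) transfers all three criticality properties to $G$.

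There is no genuinely hard step here; the only point demanding care is bookkeeping. Theorem~\ref{Tma} is phrased for graphs already assumed to carry the criticality properties and outputs the structural conclusion $G\cong D_n$, whereas Theorem~\ref{Tma: 8} is a purely combinatorial characterization of $D_n$ in terms of (a), (b), (c). I would therefore make explicit that the shared middle term ``$n$ even and $G\cong D_n$'' is what glues the two biconditionals together, with nonelementariness supplied automatically by $n\geq 8$. Once these alignments are stated, the desired biconditional follows by transitivity.
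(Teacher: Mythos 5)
Your proposal is correct and is exactly the argument the paper intends: the corollary is stated immediately after the remark ``Thus, from theorems \ref{Tma} and \ref{Tma: 8} we obtain the following,'' so the paper's own (implicit) proof is precisely the chaining of the two theorems through the middle term ``$n$ even and $G\cong D_n$.'' Your write-up is in fact slightly more careful than the paper, since you note that nonelementariness follows automatically from $\gamma_R(G)=4<n$ and that the reverse direction needs the proposition asserting $D_n$ itself is v-critical, e-critical and Roman saturated (Theorem \ref{Tma} alone only gives the forward implication).
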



\end{document}